\def\cube#1#2#3#4#5#6#7#8{
& #5 \ar[rr] \ar[dl] \ar@{-}[d] && #6 \ar[dd] \ar[dl] \\
#1 \ar[rr] \ar[dd]  & \ar[d] & #2 \ar[dd] \\
& #7 \ar@{-}[r] \ar[dl] & \ar[r] & #8 \ar[dl] \\
#3 \ar[rr] && #4 \\
}
\def\sing{{\text{sing}}}
\def\ker{\operatorname{ker}}
\def\cC{\mathcal C}
\def\cF{\mathcal F}
\def\cT{\mathcal T}
\def\coker{\operatorname{coker}}
\def\rank{\operatorname{rank}}
\def\Proj{\operatorname{Proj}}
\def\uHom{\operatorname{\underline{Hom}}}
\def\Ext{\operatorname{Ext}}
\def\sExt{\sExt}
\def\into{\hookrightarrow}
\def\a{\alpha}
\def\b{\beta}
\newcommand{\Z}{\mathbb{Z}}
\newcommand{\fm}{{\mathfrak m}}
\numberwithin{equation}{section}
\theoremstyle{plain} 
\newtheorem{thm}[equation]{Theorem}
\newtheorem{thm-conj}[equation]{Theorem-Conjecture}
\newtheorem{defn-conj}[equation]{Definition-Conjecture}
\newtheorem*{introthm*}{Theorem}
\newtheorem{prop}[equation]{Proposition}
\newtheorem{conj}[equation]{Conjecture}
\theoremstyle{definition}
\newtheorem{ex}[equation]{Example}
\theoremstyle{remark}
\newtheorem{rem}[equation]{Remark}
\newtheorem{setup}[equation]{Setup}
\def\Dsg{\on{D}^{\on{\sing}}_{\gr}}
\def\Dsing{\Dsg}
\def\Dbgr{\on{D}^{\on{b}}_{\gr}}
\def\Db{\on{D}^{\on{b}}}
\def\Perf{\operatorname{Perf}}
\newcommand{\Hom}{\operatorname{Hom}}
\newcommand\depth{\operatorname{depth}}
\newcommand{\xra}[1]{\xrightarrow{#1}}
\newcommand{\xla}[1]{\xleftarrow{#1}}
\newcommand{\id}{\operatorname{id}}
\def\and{ \text{ and } }
\def\OO{\mathcal{O}}
\def\O{\Omega}
\def\on{\operatorname}
\def\mfgr{\on{mf}_{\on{gr}}}
\def\hmfgr{\on{hmf}_{\on{gr}}}
\def\sing{\on{sing}}
\def\MR#1{}
\def\ce{\coloneqq}
\def\PP{\mathbb{P}}
\def\gr{\on{gr}}
\def\c{\colon}
\def\th{\on{th}}
\def\T{\mathcal{T}}
\def\MCM{\underline{\on{MCM}}_{\gr}}
\newcommand{\co}{\colon}
\subjclass[2020]{13D02, 14F08}
\title{Ranks of matrix factorizations and sheaf cohomology}
\author{Michael K. Brown}
\author{Mark E. Walker}
\date{}
\thanks{Brown was partially supported by NSF grant DMS-2302373 and Walker by NSF grant DMS-2200732.}
\begin{document}

\begin{abstract}
Buchweitz-Greuel-Schreyer conjectured in 1987 a lower bound on the ranks of matrix factorizations over certain local hypersurface rings. We study a graded version of this conjecture, and we show that it implies a novel conjecture concerning the cohomology of sheaves over non-Fano projective hypersurfaces.
 \end{abstract}
\maketitle
\section{Introduction}

Let $Q$ be a commutative ring and $f \in Q$. A \emph{matrix factorization of $f$} is a tuple $(F^0, F^1, s^0, s^1)$, where $F^0$, $F^1$ are finitely generated free $Q$-modules, and $s^0 \c F^0 \to F^1$, $s^1 \c F^1 \to F^0$ are $Q$-linear maps such that $s^1s^0 = f \cdot \id_{F^0}$ and $s^0s^1 = f \cdot \id_{F^1}$. 
A \emph{trivial} matrix factorization is a direct sum of copies of $(Q, Q, 1, f)$ and $(Q, Q, f, 1)$. The following is a weak version of a conjecture of Buchweitz-Greuel-Schreyer:

\begin{conj}[Weakening of \cite{BGS} Conjecture A]
\label{BGSconj}
Let $Q$ be a regular local ring of dimension $n+1$ with algebraically closed residue field, and let $e$ denote the floor of $\frac{n}{2}$, which we write as $\lfloor \frac{n}{2} \rfloor$. Suppose $f \in Q$ is  irreducible and the hypersurface $Q/(f)$ has an isolated singularity. Any nontrivial matrix factorization $(F^0, F^1, s^0, s^1)$ of $f$ satisfies $\rank(F^0) \ge~2^e$. 
\end{conj}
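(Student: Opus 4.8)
The plan is to reduce the conjecture, via Eisenbud's dictionary between matrix factorizations and maximal Cohen--Macaulay (MCM) modules, to a lower bound on minimal numbers of generators, and then to attack that bound using Adams operations on the singularity category together with the geometry forced by the isolated-singularity hypothesis. Concretely, after splitting off trivial summands a nontrivial matrix factorization $(F^0,F^1,s^0,s^1)$ of $f$ is the minimal free presentation of a nonzero MCM module $M$ over $R \ce Q/(f)$ with no free summand, and $\rank(F^0)$ equals the minimal number of generators of $M$; so the claim becomes that any such $M$ needs at least $2^e$ generators. Because $R$ has an isolated singularity, $M$ restricts to a vector bundle $\cE$ on the regular scheme $U \ce \Spec R \setminus \{\fm\}$, of dimension $n-1$, and the syzygy of the factorization gives a second bundle $\cF$ with $\rank(\cE) + \rank(\cF) = \rank(F^0)$. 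The case $f = x_0^2 + \cdots + x_n^2$, where $M$ is a spinor module for the Clifford algebra of $f$ and the bound $2^e$ is exactly attained, is the benchmark to respect --- and, I expect, the right source of auxiliary test objects.

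For the bound itself I would work in $K_0$ of the stable category $\operatorname{\underline{MCM}}(R) \simeq \operatorname{D}_{\mathrm{sg}}(R)$, on which the Adams operations $\psi^j$ act; $\psi^{-1}$ is an involution, so it splits $K_0(\operatorname{D}_{\mathrm{sg}}(R))_\Q$ into $\pm 1$ eigenspaces, and a local Riemann--Roch computation --- in the spirit of Walker's proof of the total rank conjecture, and of Moore--Piepmeyer--Spiroff--Walker on the $\theta$-pairing --- identifies the class $[M]$ with the Chern character of $\cE$ on $U$. The idea is to play two estimates against each other: on one side, $\rank(F^0)$ bounds the size of any representative of $\psi^{-1}[M]$ built from the matrix factorization; on the other, pairing $[M]$ against a carefully chosen auxiliary MCM module $N$ of small rank --- a spinor-type object, or the stabilization of a length-$e$ linear section cutting out a point of $U$ --- produces, via Hochster's $\theta$-pairing and Riemann--Roch on $U$, an integer whose nonvanishing forces the factor $2^e$ into $\rank(F^0)$. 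When $n$ is even the $\theta$-pairing supplies the needed nonvanishing; when $n$ is odd $\theta$ vanishes identically, so one must instead use a $\psi^{-1}$-equivariant (Grothendieck--Witt type) refinement of $K$-theory --- which is precisely where $\lfloor n/2\rfloor$, rather than $\lceil n/2\rceil$, enters.

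The hard part will be extracting the \emph{sharp} constant: estimates of exactly this shape are already known to give \emph{some} exponential lower bound on $\rank(F^0)$, but closing the gap to $2^e$ seems to demand positivity information about the higher Chern classes of $\cE$ on $U$ that is not formal. This is where the geometric reformulation developed in this paper should help. In the graded version of the conjecture the relevant datum is a smooth hypersurface $X = \Proj R \subset \PP^n$ of dimension $n-1$, to which the matrix factorization attaches an ACM bundle $\cE$ --- one with no intermediate cohomology --- and the BGS bound translates into a splitting criterion: in small enough rank such a bundle must be a direct sum of line bundles $\cO_X(j)$, equivalently the matrix factorization is trivial. For Fano hypersurfaces ($\deg f \le n$) this is governed by Horrocks-type splitting criteria; the non-Fano range $\deg f \ge n+1$ --- Calabi--Yau and general type --- is exactly where those criteria fail, and proving the splitting there (I would attempt a resolution of $\cE$ assembled from line bundles via the Beilinson/Koszul machinery on the ambient $\PP^n$, then bound the ranks of its terms against the intermediate-cohomology vanishing) is the crux, and remains open.
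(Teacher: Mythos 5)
The statement you are trying to prove is Conjecture~\ref{BGSconj}, which the paper does not prove and does not claim to prove: it is presented as an open conjecture (a weakening of Buchweitz--Greuel--Schreyer's Conjecture A), known only for quadrics via Kn\"orrer periodicity, generically by Erman, and in scattered further cases. The paper's contribution is a \emph{translation}: via Orlov's theorem, the graded version of Conjecture~\ref{BGSconj} implies (and, in the Calabi--Yau case, is equivalent to) the sheaf-cohomology Conjecture~\ref{newconj}. So there is no proof in the paper to compare yours against, and your text is, by its own admission, a research program rather than a proof --- you state explicitly that the crux ``remains open.''

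Beyond that, the program as written has concrete gaps. The central step --- pairing $[M]$ against a ``carefully chosen auxiliary MCM module $N$'' so that a Riemann--Roch computation ``forces the factor $2^e$ into $\rank(F^0)$'' --- is not carried out: no such $N$ is produced, and a pairing argument of this shape can only bound $\rank(F^0)$ below by whatever integer the pairing actually computes; you give no reason that integer should be as large as $2^e$ rather than, say, a multiplicity. For $n$ odd the $\theta$-pairing vanishes identically on isolated hypersurface singularities, and the ``Grothendieck--Witt type refinement'' you invoke to rescue that case is named but never defined or used, so the appearance of $\lfloor n/2\rfloor$ is not explained. Your claim that estimates ``of exactly this shape are already known to give some exponential lower bound on $\rank(F^0)$'' is also unsupported: Walker's Adams-operation argument bounds total Betti numbers of finite-projective-dimension modules over the regular ring, not the ranks of the free modules in a $2$-periodic resolution over the hypersurface, where the class of $M$ in $K_0$ of the singularity category carries no rank information at all; no general exponential lower bound for nontrivial matrix factorizations is known, which is precisely why the conjecture is open. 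Finally, the splitting criterion you turn to at the end is \cite[Conjecture B]{BGS}, which is a \emph{consequence} of the graded Conjecture A rather than a route to it, and is itself open in exactly the non-Fano range you would need.
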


We explain how Conjecture~\ref{BGSconj} is a weakening of \cite[Conjecture A]{BGS} in Remark~\ref{BGSrem}. The case of Conjecture~\ref{BGSconj} where $f$ is quadratic follows from 
Kn\"orrer periodicity \cite{knorrer}. Conjecture~\ref{BGSconj} was also recently proven to hold generically by Erman~\cite{erman}. Aside from these and several additional scattered cases, the conjecture remains open.

A graded version of
Buchweitz-Greuel-Schreyer's Conjecture implies a Horrocks-type splitting criterion for vector bundles of sufficiently small rank on projective hypersurfaces: see \cite[Conjecture B]{BGS}. In this paper, we pose a quite different conjecture concerning the cohomology of sheaves on projective hypersurfaces, which is also motivated by \cite[Conjecture A]{BGS}. Before stating it, we introduce some notation: given a proper scheme $Y$ over a field $k$ and a bounded complex $\cC$ of coherent sheaves on $Y$, we set
$$
h^j(Y, \cC) \ce \dim_k \mathbf{R}^j\Gamma(Y, \cC) \quad \text{and} \quad h(Y, \cC) \ce \sum_{j \in \Z} h^j(Y, \cC).
$$
Let $i \co Y \into \PP^n_k$ be a closed embedding. Given a bounded complex $\cC$ of coherent sheaves on $Y$, we define
\begin{equation}
\label{eqn:rho}
\rho(\cC) \ce \sum_{r = 0}^n h(\PP^n_k, i_*(\cC) \otimes \Omega_{\PP^n_k}^r(r)).
\end{equation}
We caution the reader that $\rho(\cC)$ depends not just on the object $\cC$ but also on the embedding $i \co Y \into \PP^n_k$.

\begin{conj}
\label{newconj}
Let $k$ be an algebraically closed field and $i \c X = V(f)  \into \PP^n_k$ a closed embedding of a smooth irreducible hypersurface of degree $d$ such that $a \ce n+1 - d \le 0$ (i.e. such that $X$ is not Fano). For any nonzero object $\cC \in \Db(X)$, we have $\rho(\cC) \ge 2^{e+1}$, where $e \ce \lfloor \frac{n}{2} \rfloor$.
\end{conj}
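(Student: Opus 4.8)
\emph{Proof idea.} The statement is not expected to be provable unconditionally: the plan is to deduce Conjecture~\ref{newconj} from the graded version of Conjecture~\ref{BGSconj} --- the same lower bound $\rank_Q F^0 \ge 2^e$ for nontrivial \emph{graded} matrix factorizations of $f$ over $Q = k[x_0,\dots,x_n]$ --- using Orlov's description of singularity categories of graded hypersurfaces. Set $A = Q/(f)$. Since $X = \Proj A$ is smooth and irreducible and $d \ge n+1 \ge 2$, the ring $A$ is a domain with an isolated singularity at its irrelevant maximal ideal, so the graded Buchweitz--Greuel--Schreyer bound applies; because $f \ne 0$ and $Q$ is a domain, $s^1$ is injective, so $\rank_Q F^0 = \rank_Q F^1$ and hence $\rank_Q F^0 + \rank_Q F^1 \ge 2^{e+1}$ for any nontrivial graded matrix factorization. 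Because $a = n+1-d \le 0$, Orlov's theorem provides a fully faithful exact functor $\Phi \co \Db(X) \hookrightarrow \D^{\sing}_{\gr}(A)$ (an equivalence when $a = 0$), and $\D^{\sing}_{\gr}(A)$ is equivalent, by Buchweitz--Eisenbud, to the homotopy category of graded matrix factorizations of $f$. A nonzero $\cC \in \Db(X)$ therefore yields a nonzero object $\Phi(\cC)$, represented by a (unique) reduced, hence nontrivial, graded matrix factorization $M_\cC = (F^0, F^1, s^0, s^1)$. It thus suffices to prove the rank identity
\[
\rho(\cC) = \rank_Q F^0 + \rank_Q F^1
\]
(only ``$\ge$'' is strictly needed, but I expect equality).

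To prove this I would proceed in two stages. First, $\rho(\cC)$ counts the Beilinson monad of $i_*\cC$: the Beilinson spectral sequence for the full exceptional collection $\cO_{\PP^n_k}, \cO_{\PP^n_k}(-1), \dots, \cO_{\PP^n_k}(-n)$ presents $i_*\cC \in \Db(\PP^n_k)$ as the totalization of a complex each of whose terms is a direct sum of line bundles $\cO_{\PP^n_k}(-r)$ with $0 \le r \le n$, and the number of copies of $\cO_{\PP^n_k}(-r)$ occurring in homological degree $p$ is $\dim_k \mathbf{R}^{p+r}\Gamma(\PP^n_k, i_*\cC \otimes \Omega^r_{\PP^n_k}(r))$; summing over $p$ and $r$, the total number of line-bundle summands is exactly $\rho(\cC)$. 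Second, the matrix factorization computes the total $Q$-Betti number of a saturated module: for $N \gg 0$ the finitely generated graded $A$-module $M \ce \bigoplus_{j \ge N} \mathbf{R}\Gamma(X, \cC(j))$ sheafifies to $\cC$ and, for the correct normalization of $\Phi$, represents $\Phi(\cC)$ in $\D^{\sing}_{\gr}(A)$; since $M$ is a torsion $Q$-module and $f \in \fm^2$, the theory of Eisenbud operators gives that the eventual value of $\beta^A_i(M)$ equals $\tfrac12 \dim_k \Tor^Q_\bullet(M, k)$, while by Eisenbud's theorem this eventual value equals $\rank_Q F^0 = \rank_Q F^1$; hence $\rank_Q F^0 + \rank_Q F^1 = \dim_k \Tor^Q_\bullet(M, k)$. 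The rank identity then reduces to the comparison $\dim_k \Tor^Q_\bullet(M, k) = \rho(\cC)$ for $N \gg 0$, which I would prove by a Serre-type argument: using that $\Omega^r_{\PP^n_k}(r)$ is the sheafification of the $r$th syzygy of $k$ over $Q$, one identifies the minimal $Q$-free resolution of $M$ in the stable range with the Beilinson monad of $i_*\cC$ transported to graded modules, so that their total ranks agree.

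The main obstacle is this rank identity, and within it two points need care. First, one must make Orlov's functor explicit enough to pin down $M_\cC$: this means fixing the correct window of internal degrees and verifying that the transported Beilinson monad computes $\Phi(\cC)$ rather than $\Phi$ applied to a twist of $\mathbf{L}i^{*}i_{*}\cC$ --- since $X \subset \PP^n_k$ is a divisor, $\mathbf{L}i^{*}i_{*}\cC \not\simeq \cC$, so the comparison is cleanest to run entirely at the level of saturated graded $Q$- and $A$-modules. Second, one must know that the matrix factorization extracted from the \emph{minimal} Beilinson monad is reduced --- equivalently, that the relevant null-homotopy of multiplication by $f$ has entries in the maximal ideal --- which should follow from $d = n+1-a > n$ forcing every homotopy entry to have positive internal degree, but the degree bookkeeping must be done carefully. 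Granting the identity, Conjecture~\ref{newconj} follows from the graded Buchweitz--Greuel--Schreyer bound as in the first paragraph. As a consistency check: for $\cC = \cO_x$ with $x \in X$ a closed point, $i_*\cO_x \otimes \Omega^r_{\PP^n_k}(r)$ is a skyscraper of length $\binom{n}{r}$, so $\rho(\cO_x) = \sum_{r=0}^n \binom{n}{r} = 2^n$, while $\Phi(\cO_x)$ is the Koszul matrix factorization of $n$ general linear forms through $x$ (whose ideal contains $f$), of rank $2^{n-1}$ on each side, so $\rho(\cO_x) = 2 \cdot 2^{n-1} \ge 2^{e+1}$, consistent with both the rank identity and the conjecture.
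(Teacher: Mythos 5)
You correctly recognize that the statement is a conjecture and propose to derive it from the graded form of Conjecture~\ref{BGSconj}; this is exactly the content of Theorem~\ref{introthm}(1), and the key step you isolate --- the identity $\rho(\cC) = \rank(F^0) + \rank(F^1)$ for the reduced matrix factorization representing the image of $\cC$ under Orlov's functor --- is the paper's Theorem~\ref{translation}(2). The reduction itself (a nonzero $\cC$ gives a nonzero, hence nontrivial, reduced matrix factorization; $\rank F^0 = \rank F^1$; apply the BGS bound) is sound and matches the paper.

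However, your proposed proof of the rank identity has genuine gaps. First, the normalization is not a detail that can be deferred: the truncations $\bigoplus_{j\ge N}\Gamma(X,\cC(j))$ for different $N$ differ by finite-length modules, which are nonzero in $\Dsg(R)$, so they define genuinely different objects of the singularity category whose matrix factorizations have different ranks (growing with $N$); the identity can only hold for the specific Orlov window, i.e.\ for $\Psi_a(\cC) = \bigoplus_{j\ge 0}\mathbf{R}\Gamma(X,\cC(j))$, not ``for $N\gg 0$.'' Second, the intermediate identity $\dim_k\Tor^Q_\bullet(M,k) = \rho(\cC)$ is false: for $\cC = \OO_X$ and $M$ the untruncated section module the left-hand side is $2$, while the paper computes $\rho(\OO_X)\ge 2^{n+1}$; for high truncations the left-hand side grows without bound while $\rho(\cC)$ is fixed. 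Relatedly, the eventual $A$-Betti number of $M$ is bounded above by, but not in general equal to, half the total $Q$-Betti number (the Shamash resolution need not be eventually minimal), and the graded $Q$-Betti numbers of a truncated section module are governed by the linear strand of its resolution, not by the Beilinson monad terms $h^q(\PP^n_k, i_*\cC\otimes\Omega^{-p}_{\PP^n_k}(-p))$. The paper's actual mechanism is different and avoids $Q$-resolutions of section modules entirely: it reads off the graded Betti numbers $b^i_j$ of the matrix factorization as $\dim_k\Hom_{\Dsg(R)}(\Psi_a(\cC), k(-j)[i-1])$, applies Auslander--Serre duality in $\Dsg(R)$ (Proposition~\ref{auslander}) and the adjunction $\Phi_0\dashv\Psi_a$ (Proposition~\ref{prop:adj}) to convert these into $\Ext$ groups on $X$ out of $\Phi_0(k(\ell))$, and then uses the explicit computation $\Phi_0(k(\ell))\cong i^*(\bigwedge^{r+a}\T_{\PP^n_k})(-r-a)[2q+n-r-a-1]$ of Proposition~\ref{psik}; this last step is where the twists $\Omega^{r}_{\PP^n_k}(r)$ actually enter, and it is the ingredient missing from your outline.
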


The lower bound in Conjecture~\ref{newconj} is sharp: see Example~\ref{ex:sharp}. Moreover, the statement in Conjecture~\ref{newconj} is false without the assumption that $a \le 0$: see Remark~\ref{rem:false}.  Conjecture~\ref{newconj} predicts that the total rank of the $E_1$ page of the Beilinson spectral sequence associated to $\cC$ is at least $2^{e+1}$: see Remark~\ref{beilinsonremark} for details. 

\medskip
The connection between Conjectures~\ref{BGSconj} and~\ref{newconj} is illustrated by the following result:

\begin{thm}
\label{introthm}
Let $k$ be an algebraically closed field and $i \c X = V(f)  \into \PP^n_k$ a closed embedding of a smooth irreducible hypersurface of degree $d$ such that $a \ce n+1 - d \le 0$ (i.e. such that $X$ is not Fano).
\begin{enumerate}
\item Assume the graded version of Conjecture~\ref{BGSconj} holds for $f$. That is, assume all nontrivial graded matrix factorizations of $f$ (see Section~\ref{gradedmf} for the definition of a graded matrix factorization) satisfy the inequality in Conjecture~\ref{BGSconj}. In this case, Conjecture~\ref{newconj} holds for $X$. 
\item Assume $a = 0$, i.e. $X$ is Calabi-Yau. If Conjecture~\ref{newconj} holds for $X$, then the graded version of Conjecture~\ref{BGSconj} holds for $f$.
\end{enumerate}
\end{thm}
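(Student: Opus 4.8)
The plan is to identify $\rho(\cC)$ with the total rank of a matrix factorization attached to $\cC$ through Beilinson's resolution of the diagonal, and then feed that identity into the graded Buchweitz--Greuel--Schreyer inequality. Set $Q = k[x_0,\dots,x_n]$ and $R = Q/(f)$, so graded matrix factorizations of $f$ are the objects of $\hmfgr(Q,f)\simeq\Dsing(R)$; smoothness of $X$ is equivalent to $\Spec R$ having an isolated singularity at the irrelevant ideal of the regular ring $Q$ of dimension $n+1$, so the graded form of Conjecture~\ref{BGSconj} is precisely an assertion about the matrix factorizations of our $f$, the two occurrences of $e = \lfloor n/2\rfloor$ agreeing. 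Since $f$ is a nonzerodivisor, every matrix factorization $(F^0,F^1,s^0,s^1)$ of $f$ has $\rank F^0 = \rank F^1$, so $\rank F^0\ge 2^e$ is equivalent to $\rank F^0+\rank F^1\ge 2^{e+1}$. Orlov's theorem is the conceptual backdrop: for $a\le 0$ there is a fully faithful functor $\Db(X)\into\Dsing(R)$, an equivalence when $a=0$, compatible with sheafification on $X = \Proj R$.

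The core of the proof is the identity $\rho(\cC) = \rank F^0 + \rank F^1$ for a suitable \emph{reduced} matrix factorization $\mathbf F = (F^0,F^1,s^0,s^1)$ of $f$ attached to a nonzero $\cC\in\Db(X)$. I would construct $\mathbf F$ directly: Beilinson's resolution $\cO_\Delta\simeq[\,\Omega^n_{\PP^n}(n)\boxtimes\cO(-n)\to\cdots\to\cO\boxtimes\cO\,]$ exhibits $i_*\cC$ as the totalization of a complex whose $r$-th term is $\mathbf{R}\Gamma(\PP^n_k, i_*\cC\ot\Omega^r_{\PP^n}(r))\ot\cO(-r)$, $0\le r\le n$; replacing this by its minimal model $\cB^\bullet$ — a bounded complex of sums of $\cO(-r)$, $0\le r\le n$, whose differential has entries in the maximal ideal $(x_0,\dots,x_n)$ — gives $i_*\cC\simeq\cB^\bullet$ with total rank exactly $\sum_{q,r}h^q(\PP^n_k, i_*\cC\ot\Omega^r_{\PP^n}(r)) = \rho(\cC)$. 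As $f$ annihilates $i_*\cC$, multiplication by $f$ is null-homotopic on $\cB^\bullet$; choosing a homotopy $h$ and unrolling the resulting $2$-periodic datum, then applying $\bigoplus_j\Gamma(\PP^n_k,-(j))$, produces a graded matrix factorization $\mathbf F$ of $f$ over $Q$ of total rank $\sum_m\rank\cB^m = \rho(\cC)$ — this is Orlov's functor made explicit. The key point is that $\mathbf F$ is reduced: the Beilinson differentials are linear, and because $d = n+1-a\ge n+1$ while the twist range $\{0,\dots,n\}$ has width $n$, the matrix entries of $h$ become, after $\Gamma_*$ and unrolling, homogeneous of degree $\ge d-n\ge 1$; hence all entries of $\mathbf F$ lie in the maximal ideal, so $\mathbf F$ has no trivial summands and its total rank equals $\rho(\cC)$ exactly. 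Finally $\mathbf F$ is nontrivial if and only if $\cC\ne 0$, since a reduced matrix factorization is trivial only when zero, i.e. only when $\cB^\bullet = 0$, i.e. $i_*\cC\simeq 0$.

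Given the identity, both implications are short. For (1): if $0\ne\cC\in\Db(X)$ then $\mathbf F$ is nontrivial, so the graded Buchweitz--Greuel--Schreyer bound forces $\rank F^0\ge 2^e$, whence $\rho(\cC) = \rank F^0+\rank F^1 = 2\rank F^0\ge 2^{e+1}$. For (2), where $a=0$ and hence $\deg f = n+1$: starting from a nontrivial matrix factorization, delete trivial summands and decompose into indecomposables to reduce to $\mathbf F$ reduced and indecomposable; such an $\mathbf F$ has all twists of $F^0\oplus F^1$ within $n+1$ consecutive integers, hence, after an overall twist, within $\{0,\dots,n\}$. Then $M := \coker(s^1)$ is a maximal Cohen--Macaulay $R$-module with no free summands, so $\depth M = n\ge 1$ and $\cC := \widetilde{M}\in\coh X$ is nonzero; moreover $\pd_Q M = 1$, with resolution $0\to F^1\xra{s^1}F^0\to M\to 0$, so $i_*\cC\simeq[\widetilde{F^1}\to\widetilde{F^0}]$ is itself a minimal Beilinson monad, and by uniqueness of the minimal monad its total rank is $\rho(\cC)$; thus $\rho(\cC) = \rank F^0+\rank F^1$. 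Conjecture~\ref{newconj} applied to $\cC$ then gives $\rank F^0+\rank F^1\ge 2^{e+1}$, i.e. $\rank F^0\ge 2^e$, which is the graded Buchweitz--Greuel--Schreyer inequality for $f$.

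The main obstacle is the rank identity, and inside it the degree bookkeeping that makes $\mathbf F$ reduced: one must verify that Beilinson's monad involves only the twists $\cO(-r)$ with $0\le r\le n$ and has entries in the maximal ideal, and that the homotopy $h$ acquires strictly positive degree after unrolling — this is exactly where $\deg f\ge n+1$ enters — so that $\rho(\cC)$ is attained exactly rather than merely bounded below. A secondary technical point, needed only for (2), is the ``window'' fact that a reduced indecomposable graded matrix factorization of a degree-$(n+1)$ form has its generators in $n+1$ consecutive degrees; this, together with matching the construction above to Orlov's equivalence, is what makes the converse hold precisely in the Calabi--Yau case $a = 0$.
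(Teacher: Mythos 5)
Your route---realize $i_*\cC$ by its minimal Beilinson monad $\cB^\bullet$ and fold that into a graded matrix factorization---is genuinely different from the paper's, which instead computes the graded Betti numbers of $\Psi_a(\cC)$ via the adjunction $\Phi_0 \dashv \Psi_a$, Serre duality on $\Dsg(R)$, and the identification of $\Phi_0(k(\ell))$ with twisted exterior powers of $\T_{\PP^n_k}$. Your degree bookkeeping for reducedness (entries of the homotopy have degree at least $d-n\ge 1$) is correct and is the right place where $a\le 0$ enters. But there are two genuine gaps. First, the folding step: if $h$ is a single null-homotopy for $f$ on $\cB^\bullet$, then $(d+h)^2 = f\cdot\id + h^2$, which is not $f\cdot\id$ once the monad occupies three or more cohomological degrees. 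You need a full Eisenbud system of higher homotopies $\sigma_2,\sigma_3,\dots$, and their existence is a theorem only for free resolutions of \emph{modules} annihilated by $f$; for a bounded complex the obstruction to $\sigma_2$ is a class in $\Hom_{\Db(\PP^n_k)}(i_*\cC, i_*\cC(2d)[-2])$, a group that does not vanish for formal reasons when $\cC$ is a genuine complex. So ``unrolling'' as described does not yet produce a matrix factorization, and repairing this essentially forces you to identify your construction with the image of $\cC$ in $\Dsg(R)\cong\hmfgr(f)$, i.e.\ to reintroduce the Orlov-type machinery you were bypassing.

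Second, part (2) rests on the ``window'' assertion that a reduced indecomposable graded matrix factorization of a degree-$(n+1)$ form has all generators of $F^0\oplus F^1$ in $n+1$ consecutive degrees. This is unproved and fails in general: by Theorem~\ref{translation}(1) with $a=0$, the generator degrees are $j=r-qd$, where $q$ records which cohomological degree of $\mathbf{R}\Gamma(\PP^n_k, i_*(\cC)\otimes\Omega^{r}_{\PP^n_k}(r))$ is contributing; as soon as two cohomological degrees contribute (already possible for indecomposable sheaves on a plane cubic or a K3, and certainly for complexes), the degrees spread across more than one width-$d$ window. Moreover, even where it holds, it is not quite the condition you need: with the paper's conventions the $S$-free resolution of $\coker(s^1)$ is $0\to F^1(-d)\to F^0\to\coker(s^1)\to 0$, so what must fit into $n+1$ consecutive twists is the generators of $F^0$ together with those of $F^1$ shifted by $d$---a different constraint from the one you state. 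The paper's proof of (2) sidesteps all of this by setting $\cC \ce \Phi_0(\coker F)$ and using that $\Phi_0,\Psi_0$ are inverse equivalences in the Calabi--Yau case, so that the rank identity $\rank(F)=\rho(\cC)$ of Theorem~\ref{translation}(2) applies directly to the given reduced $F$; you would need that identity in the same form, not only for monad-folded models.
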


See Theorem~\ref{translation} for a more precise result concerning the relationship between ranks of matrix factorizations and sheaf cohomology, which we use to prove Theorem~\ref{introthm}. Theorem~\ref{translation} is an application of a result of Orlov relating graded singularity categories to derived categories of sheaves \cite[Theorem 2.5]{orlovcoh}. See also work of Pavlov~\cite{pavlov20}, which applies this same theorem of Orlov to give a cohomological formula for the Betti numbers of maximal Cohen-Macaulay modules over the coordinate ring of a Calabi-Yau variety in the case where the coordinate ring is Koszul. 

\subsection*{Conventions} We index cohomologically. Given a complex $(C, d_C)$ and $i \in \Z$, the shift $C[i]$ is given by $C[i]^j = C^{i + j}$ and $d_{C[i]} = (-1)^id_C$. If $Q = \bigoplus_{i \in \Z} Q_i$ is a graded ring, and a $M = \bigoplus_{i \in \Z} M_i$ is a graded $Q$-module, then the $i^{\th}$ grading twist $M(i)$ of $M$ is given by $M(i)_j = M_{i + j}$. Given graded $Q$-modules $M$ and $N$, the set of degree 0 maps from $M$ to $N$ is denoted $\Hom_R(M, N)$, and we write the graded $R$-module given by the internal $\Hom$ from $M$ to $N$ as $\uHom_R(M, N)$. The derived versions of these objects are denoted by  $\mathbf{R}$$\Hom_R(M, N)$ and $\mathbf{R}$$\uHom_R(M, N)$.  

\begin{setup}
\label{notation}
We will use the following notation throughout the paper. Let $k$ be a field, $S \ce k[x_0, \dots, x_n]$ a standard graded polynomial ring, $f \in S$ a homogeneous and irreducible polynomial of degree $d$, $R \ce S / (f)$, $\fm$ the homogeneous maximal ideal of $R$, $X$ the projective hypersurface $\Proj(R)$, and $i$ the closed embedding $X \into \PP^n_k$. We assume $n \ge 1$ throughout. We write $a \ce n+1 - d$ and $e \ce \lfloor \frac{n}{2} \rfloor$. 
\end{setup}

\section{Background}

Let $\Dbgr(R)$ denote the bounded derived category of graded $S$-modules, $\Perf_{\gr}(R) \subseteq \Dbgr(R)$ the subcategory of perfect complexes, and $\Dsg(R)$ the \emph{graded singularity category of $R$}, i.e. the Verdier quotient $\Dbgr(R) / \Perf_{\gr}(R)$. The graded singularity category can be interpreted in terms of graded matrix factorizations: let us recall some background on these objects.

\subsection{Background on graded matrix factorizations}
\label{gradedmf}

A \emph{graded matrix factorization of $f$} is a tuple $(F^0, F^1, s^0, s^1)$, where $F^0$, $F^1$ are finitely generated graded free $S$-modules, and
$$
s^0 \c F^0 \to F^1, \quad s^1 \c F^1(-d) \to F^0
$$
are homogeneous maps of degree 0 such that $s^1s^0 = f \cdot \id_{F^0}$ and $s^0s^1 = f \cdot \id_{F^1}$. We have $\rank(F^0) = \rank(F^1)$ \cite[Corollary 5.4]{eisenbud}. A \emph{morphism of graded matrix factorizations}
$
\a \c (F^0, F^1, s^0, s^1) \to (G^0, G^1, t^0, t^1)
$
 is a pair of degree 0 maps $\a^0 \c F^0 \to G^0, \a^1 \c F^1 \to G^1$  making the diagram
$$
\xymatrix{
F^1(-d) \ar[r]^-{s^1} \ar[d]^-{\a^1} & F^0 \ar[r]^-{s^0} \ar[d]^-{\a^0} & F^1 \ar[d]^-{\a^1} \\
G^1(-d) \ar[r]^-{t^1} & G^0 \ar[r]^-{t^0} & G^1 \\
}
$$
commute. A \emph{homotopy} between such morphisms $\a, \b$ is a pair of homogeneous maps $h^0 \c F^0 \to G^1(-d)$, $h^1 \c F^1 \to G^0$ satisfying the evident relations. The category $\mfgr(f)$ has objects given by graded matrix factorizations and morphisms as above, and the homotopy category $\hmfgr(f)$ of graded matrix factorizations has the same objects as $\mfgr(f)$ and morphisms given by modding out the maps in $\mfgr(f)$ by homotopy.

The category $\hmfgr(f)$ may be equipped canonically with the structure of a triangulated category; for instance, its shift functor sends $F = (F^0, F^1, s^0, s^1)$ to $F[1] \ce (F^1, F^0(d), -s^1, -s^0)$. Given $i \in \Z$, the \emph{$i^{\th}$~grading twist} of $F = (F^0, F^1, s^0, s^1)$ is $F(i) = (F^0(i), F^1(i), s^0, s^1)$.  Observe that 
$
F[2] = F(d).
$

A finitely generated $R$-module $M$ is called \emph{maximal Cohen-Macaulay} (MCM) if $\depth(M) = \dim(M)$. Let $\MCM(R)$ denote the \emph{stable category of graded MCM $R$-modules}, i.e. the category with objects given by graded MCM $R$-modules and morphisms given by $R$-linear maps modulo those that factor through a projective module. The category $\MCM(R)$ may be equipped with a triangulated structure such that the shift functor sends an MCM module $M$ to its first syzygy. It follows from (a graded version of) a theorem of Eisenbud~\cite[Theorem 6.3]{eisenbud} that there is an equivalence 
$
\hmfgr(f) \xra{\simeq} \MCM(R) 
$ of triangulated categories 
given by $(F^0, F^1, s^0, s^1) \mapsto \coker(s^0)$. Moreover, it follows from (a graded version of) a theorem of Buchweitz~\cite[Theorem 4.4.1(2)]{buchweitz} that there is an equivalence 
$
\MCM(R) \xra{\simeq} \Dsg(R)
$ of triangulated categories
that sends a module to itself, considered as a complex concentrated in cohomological degree 0. Composing these two functors yields an equivalence of triangulated categories
\begin{equation}
\label{buchweitz}
\coker \c \hmfgr(f) \xra{\simeq} \Dsg(R).
\end{equation}
Notice that $\coker(F(i)) = \coker(F)(i)$ for all $F \in \hmfgr(f)$ and $i \in \Z$. 

\begin{rem}
\label{BGSrem}
Let us explain the relationship between Buchweitz-Greuel-Schreyer's Conjecture \cite[Conjecture A]{BGS} and Conjecture~\ref{BGSconj}. The former states that, in the setting of Conjecture~\ref{BGSconj}, if $M$ is a nonfree MCM $Q/(f)$-module, then $\rank(M) \ge 2^{e-1}$.\footnote{Buchweitz-Greuel-Schreyer assume in \cite[Conjecture A]{BGS} that $M$ has no free summands, but there is no loss in assuming $M$ is simply nonfree. Also, while \cite[Conjecture A]{BGS} does not contain an isolated singularity assumption, Buchweitz-Greuel-Schreyer remark directly below \cite[Conjecture A]{BGS} that the conjecture reduces immediately to the isolated singularity case.} Given a nontrivial matrix factorization $(F^0, F^1,s^0, s^1)$ of $f$, the $Q/(f)$-modules $\coker(s^0)$ and $\coker(s^1)$ are both nonfree and maximal Cohen-Macaulay; since $\rank(F^0) = \rank(\coker(s^0)) + \rank(\coker(s^1))$, \cite[Conjecture A]{BGS} implies Conjecture~\ref{BGSconj}. 
\end{rem}

\subsection{Orlov's Theorem}
\label{sec:orlov}

The following is (a special case of) a celebrated theorem of Orlov:
\begin{thm}[\cite{orlovcoh} Theorem 2.5] 
\label{orlovthm}
There are functors
$$
\Phi_i \co \Dsg(R) \to \Db(X) \quad \text{and} \quad \Psi_i \co \Db(X) \to \Dsg(R)
$$
for all $i \in \Z$ satisfying the following:
\begin{enumerate}
\item If $a > 0$ (i.e. if $X$ is Fano), then $\Phi_i$ is fully faithful, and there is a semiorthogonal decomposition 
$
\Db(X) = \langle \OO(-i -a + 1), \dots, \OO(-i -1), \OO(-i), \Phi_i(\Dsg(R)) \rangle.
$
\item If $a < 0$ (i.e. if $X$ is general type), then $\Psi_i$ is fully faithful, and there is a semiorthogonal decomposition
$
\Dsg(R) = \langle k(-i), k(-i-1) \dots, k(-i+a + 1), \Psi_i(\Db(X)) \rangle.
$
\item If $a = 0$ (i.e. if $X$ is Calabi-Yau), then $\Phi_i$ and $\Psi_i$ are inverse equivalences. 
\end{enumerate}
\end{thm}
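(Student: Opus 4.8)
This is Orlov's theorem, so the proof I have in mind is his \cite{orlovcoh}; I would follow its strategy, which I now sketch. The plan is to realize both $\Dsg(R)$ and $\Db(X)$ as Verdier quotients of the single triangulated category $\Dbgr(R)$ and then to compare the two quotient functors. On the one hand, $\Dsg(R) = \Dbgr(R)/\Perf_{\gr}(R)$ by definition. On the other hand, Serre's equivalence identifies $\on{qgr} R$ (finitely generated graded $R$-modules modulo finite-length modules) with $\coh(X)$, and --- since $R$ is a commutative graded quotient of a polynomial ring, so that no boundedness issues arise --- this upgrades to an equivalence $\Db(X) \simeq \Dbgr(R)/\Dbgr(R)_{\mathrm{fl}}$, where $\Dbgr(R)_{\mathrm{fl}}$ denotes the thick subcategory of complexes with finite-length cohomology. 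Thus $\Dsg(R)$ and $\Db(X)$ are the quotients of $\Dbgr(R)$ by $\Perf_{\gr}(R)$ and by $\Dbgr(R)_{\mathrm{fl}}$, respectively.

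Next one introduces, for each $i \in \Z$, a pair of ``truncated'' thick subcategories of $\Dbgr(R)$ --- roughly, the subcategory $\mathcal D_{\ge i}$ generated by the twists $R(j)$ with $j \le -i$ and its analogue built out of finite-length modules --- and analyzes how $\Perf_{\gr}(R)$ and $\Dbgr(R)_{\mathrm{fl}}$ sit inside them. The key point, and the place where the hypothesis on $X$ enters, is that $R = S/(f)$ is Gorenstein with Gorenstein parameter $a = n+1-d$ (equivalently $\omega_R \cong R(-a)$), and this single invariant measures the discrepancy between the two subcategories inside each $\mathcal D_{\ge i}$: if $a > 0$, then $\Db(X) = \Dbgr(R)/\Dbgr(R)_{\mathrm{fl}}$ contains, beyond the image of $\Dsg(R)$, exactly the $a$ exceptional line bundles $\OO(-i), \dots, \OO(-i-a+1)$; if $a < 0$, then $\Dsg(R) = \Dbgr(R)/\Perf_{\gr}(R)$ contains, beyond the image of $\Db(X)$, exactly the $-a$ exceptional objects $k(-i), \dots, k(-i+a+1)$; and if $a = 0$, there is no discrepancy. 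Chasing these identifications through the quotient functors produces the functors $\Phi_i$ and $\Psi_i$ together with the semiorthogonal decompositions in (1), (2) and (3).

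The technical heart --- and the step I expect to be the main obstacle, were one to write out a complete proof --- is verifying that $\Phi_i$ (resp.\ $\Psi_i$) is fully faithful and that the listed collections are semiorthogonal and generate. This reduces to precise vanishing and concentration statements for graded $\mathbf{R}\uHom_R$ among the modules $R(j)$, the modules $k(j)$, and the graded modules representing objects of $\Db(X)$: one invokes graded local duality for the Gorenstein ring $R$, so that $\mathbf{R}\Gamma_{\fm}(R)$ is concentrated in a single cohomological degree with Hilbert series governed by $a$, together with Serre vanishing, which identifies sufficiently positive twists of $\mathbf{R}\Hom_R$-groups with sheaf cohomology on $X$. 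Once the relevant vanishing ranges are pinned down, assembling the semiorthogonal decompositions is bookkeeping. We do not reproduce this argument; see \cite[Theorem 2.5]{orlovcoh}.
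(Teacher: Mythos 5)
The paper offers no proof of this theorem---it is quoted directly from Orlov \cite{orlovcoh}, with only the explicit formulas for $\Phi_i$ and $\Psi_i$ recalled afterwards following Burke--Stevenson. Your sketch accurately reproduces Orlov's quotient-category strategy (both categories as quotients of $\Dbgr(R)$, with the Gorenstein parameter $a$ measuring the discrepancy), so it is consistent with the cited source and with how the paper uses the result.
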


Let us recall the formulas for the functors $\Phi_i$ and $\Psi_i$:

\subsubsection*{Definition of the functors $\Phi_i$} We follow Burke-Stevenson's exposition in \cite{BS} of the proof of Orlov's Theorem. The functor $\Phi_i$ factors as a composition
$
\Dsg(R) \xra{\varphi_i} \Dbgr(R) \to \Db(X),
$
where the second map is given by sheafification. Before defining $\varphi_i$, we fix some notation and recall some background. Given a complex $G$ of 
graded free $R$-modules, we let  $G_{\prec i}$ be the subcomplex of $G$ given by free summands of the form $R(i)$ with $i > 0$, and we define $G_{\succeq i} \ce G / G_{\prec i}$. We recall that any complex $C$ of finitely generated graded $R$-modules such that $H^j(C) = 0$ for $j \gg 0$ admits a \emph{minimal free resolution}, i.e. a complex $(F, d_F)$ of graded free $R$-modules such that $F^j = 0$ for $j \gg 0$, $d_F(F) \subseteq \fm F$, and there is a quasi-isomorphism $F \xra{\simeq} C$. Such a resolution exists and is unique by \cite[Proposition 4.4.1]{roberts}.

Suppose $M$ is a bounded complex of finitely generated graded $R$-modules, i.e. an object in $\Dsg(R)$: the object $\varphi_i(M)$ is defined as follows. Let $F$ be the minimal $R$-free resolution of $M$, and let $F'$ be the minimal $R$-free resolution of the complex $\uHom_R(F_{\succeq i}, R)$. Observe that $\uHom_R(F_{\succeq i}, R)$ is a complex of graded free $R$-modules, and it has bounded cohomology since $R$ is Gorenstein; also, $\uHom_R(F_{\succeq i}, R)$ is typically not bounded above, and so it usually does not coincide with $F'$. Finally, we set $\varphi_i(M) \ce \uHom_R(F', R)_{\prec i}$. 

\begin{ex}
Suppose $M$ is a bounded complex of finitely generated graded free $R$-modules; we observe that $\varphi_i(M) = 0$ for all $i$. Indeed, we have $F = M$, and $F' = \uHom_R(M_{\succeq i}, R)$; thus, $\varphi_i(M) = (M_{\succeq i})_{\prec i} = 0$. 
\end{ex}

\begin{ex}
\label{ex:MCM}
Suppose $M$ is an MCM module generated in degree 0; we will discuss explicit examples of such modules in Example~\ref{ex:sharp} below. We now show $\Phi_0(M) = \widetilde{M}$. Let $F$ be the minimal free resolution of $M$, which has the form
$
F = \left[ P \xla{\a} Q \xla{\b}  P(-d) \xla{\a} Q(-d) \xla{\b} \cdots\right]
$
for some graded free $R$-modules $P$ and $Q$ and maps $\a$ and $\b$, where $P$ is a direct sum of copies of $R$. Given a graded $R$-module $N$, write $N^\vee \ce \uHom_R(N, R)$. 
We have $F_{\succeq 0} = F$, and $\uHom_R(F, R) = \left[ P^\vee \xra{\a^\vee} Q^\vee \xra{\b^\vee} P^\vee(d) \xra{\a^\vee} Q^\vee(d) \xra{\b^\vee} \cdots \right]$. The minimal free resolution of $\uHom_R(F, R)$ is the complex $F' = \left[Q^\vee(-d) \xla{\a^\vee} P^\vee(-d) \xla{\b^\vee} Q^\vee(-2d) \xla{\a^\vee} P^\vee(-2d) \xla{\b^\vee} \cdots \right]$. Dualizing $F'$ gives the complex $\left[Q(d) \xra{\a} P(d) \xra{\b} Q(2d) \xra{\a} P(2d) \xra{\b} \cdots \right]$, whose terms are direct sums of copies of $R(j)$ for various~$j > 0$. We conclude that $\varphi_0(M) = \uHom_R(F', R)$, which is quasi-isomorphic to $M$. Thus,  $\Phi_0(M) = \widetilde{M}$. 
\end{ex}

\subsubsection*{Definition of the functors $\Psi_i$} The functor $\Psi_i$ factors as the composition
$
\Db(X) \xra{\psi_i} \Dbgr(R) \to \Dsg(R),
$
where $\psi_i(\cF) = \bigoplus_{j \ge i-a} \mathbf{R}\Gamma(X, \cF(j))$, and the second map is the canonical one~\cite[Remark 2.6]{orlovcoh}. 

\medskip

\begin{prop}
\label{prop:adj}
In the context of Theorem~\ref{orlovthm}, the functor $\Phi_i$ is the left adjoint of the functor $\Psi_{i + a}$. 
\end{prop}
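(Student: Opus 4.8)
The plan is to prove the adjunction $\Phi_i \dashv \Psi_{i+a}$ by exhibiting both functors as composites through $\Dbgr(R)$ and checking adjointness one composite at a time. Recall that $\Phi_i$ factors as $\Dsg(R) \xra{\varphi_i} \Dbgr(R) \xra{\pi} \Db(X)$, where $\pi$ is sheafification, and $\Psi_{i+a}$ factors as $\Db(X) \xra{\psi_{i+a}} \Dbgr(R) \xra{q} \Dsg(R)$, where $q$ is the Verdier quotient functor and $\psi_{i+a}(\cF) = \bigoplus_{j \ge i} \mathbf{R}\Gamma(X, \cF(j))$. First I would record that sheafification $\pi \c \Dbgr(R) \to \Db(X)$ has a right adjoint, namely the functor $\cF \mapsto \bigoplus_{j \in \Z} \mathbf{R}\Gamma(X, \cF(j))$ (the total module of twisted sections / "saturation"); this is standard Serre correspondence, the unit and counit being the usual comparison maps between a graded module and the sections of its sheafification. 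Thus it suffices to identify $\psi_{i+a}$, composed with the quotient $q$, with the relevant truncation of this right adjoint, \emph{after} passing to the singularity category.

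The key point is then to show that, for $M \in \Dsg(R)$ and $\cF \in \Db(X)$, there is a natural isomorphism
\[
\Hom_{\Db(X)}(\pi\varphi_i(M), \cF) \;\cong\; \Hom_{\Dsg(R)}(M, q\psi_{i+a}(\cF)).
\]
By the Serre adjunction above, the left side is $\Hom_{\Dbgr(R)}\big(\varphi_i(M), \bigoplus_{j \in \Z}\mathbf{R}\Gamma(X,\cF(j))\big)$. Now I would use the explicit description of $\varphi_i(M)$: it is $\uHom_R(F', R)_{\prec i}$, a bounded-above complex of graded frees built from summands $R(j)$ with $j > 0$, together with the fact (from the construction recalled before Example~\ref{ex:MCM}) that the composite $q \circ \varphi_i$ is naturally isomorphic to the identity on $\Dsg(R)$ — indeed $\varphi_i$ is by design a section of the quotient functor up to the "perfect part" that is killed in $\Dsg(R)$. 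The degree-restriction $(-)_{\prec i}$ on $\varphi_i(M)$ should match up precisely with the degree-restriction $\bigoplus_{j \ge i}$ defining $\psi_{i+a}$ (this is where the shift by $a$ enters: $\psi_{i+a}$ sums over $j \ge (i+a) - a = i$), so that Hom out of $\varphi_i(M)$ into the full section module $\bigoplus_{j\in\Z}\mathbf{R}\Gamma(X,\cF(j))$ sees only the summand $\bigoplus_{j \ge i}\mathbf{R}\Gamma(X,\cF(j)) = \psi_{i+a}(\cF)$, and maps into a bounded complex of graded frees supported in positive twists factor through $\psi_{i+a}(\cF)$. Combining these identifications with the fact that $\Hom_{\Dbgr(R)}(\varphi_i(M), -)$ descends to $\Hom_{\Dsg(R)}(M,-)$ on the Verdier quotient (because $\varphi_i(M)$ maps to $M$ under $q$ and the ambiguity is a perfect complex, which is both projective and injective-like relative to $\Perf_\gr(R)$ inside the Hom) yields the claimed isomorphism. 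Naturality in both variables is inherited from naturality of each of the elementary adjunctions and comparison maps used.

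The main obstacle I expect is the careful bookkeeping of the truncation functors $(-)_{\prec i}$, $(-)_{\succeq i}$ and $\bigoplus_{j \ge i}$ under $\mathbf{R}\uHom_R(-,R)$ and under sheafification: one must verify that applying $\Hom_{\Dbgr(R)}(\varphi_i(M), -)$ to the large (typically unbounded) section module $\bigoplus_{j \in \Z}\mathbf{R}\Gamma(X,\cF(j))$ really does factor through the truncated summand $\psi_{i+a}(\cF)$, with no contribution from the discarded twists and no higher $\Ext$ obstruction, and that this happens compatibly with the Verdier localization. Equivalently — and this may be the cleanest route — one can instead verify the triangle identities directly: construct the unit $\id_{\Db(X)} \to \Phi_i \Psi_{i+a}$ and counit $\Psi_{i+a}\Phi_i \to \id_{\Dsg(R)}$ from the evident evaluation and co-evaluation maps of twisted sections, and check the two composites are identities using Orlov's semiorthogonal decompositions from Theorem~\ref{orlovthm} (which pin down $\Phi_i, \Psi_i$ up to the relevant exceptional summands $\OO(-i-a+1),\dots,\OO(-i)$, resp. $k(-i),\dots,k(-i+a+1)$, precisely the pieces that die in the adjunction). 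In the Calabi–Yau case $a=0$ this is immediate since $\Phi_i$ and $\Psi_i$ are already inverse equivalences and $\Psi_{i+a} = \Psi_i = \Phi_i^{-1}$; the content is entirely in the Fano and general-type cases, where one leans on the semiorthogonality to kill the cross-terms.
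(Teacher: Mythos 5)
Your overall strategy is the same as the paper's: factor $\Phi_i$ as (sheafification)$\circ\varphi_i$ and $\Psi_{i+a}$ as (Verdier quotient)$\circ\psi_{i+a}$ through an intermediate graded category, and compose two adjunctions. But there are two genuine gaps. First, the ``Serre adjunction'' you start from is ill-posed as stated: the functor $\cF \mapsto \bigoplus_{j \in \Z}\mathbf{R}\Gamma(X,\cF(j))$ does not take values in $\Dbgr(R)$, since for $X$ non-Fano the modules $\bigoplus_{j\in\Z}H^{n-1}(X,\cF(j))$ are typically not finitely generated (infinitely many negative twists contribute). The paper avoids this by working with the full subcategory $\Db(R)_{\ge i} \subseteq \Dbgr(R)$ of objects with cohomology in internal degrees $\ge i$, where sheafification $\pi_i$ \emph{is} left adjoint to $\psi_{i+a}$ (note $\psi_{i+a}$ sums only over $j \ge i$, which does yield a finitely generated module); your later remark that maps out of $\varphi_i(M)$ ``only see'' the truncated summand is the right instinct, but it needs to be built into the categories from the start rather than patched in afterwards.

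Second, and more seriously, the claim that $\Hom_{\Dbgr(R)}(\varphi_i(M), N)$ ``descends'' to $\Hom_{\Dsg(R)}(M, qN)$ is exactly the assertion that $\varphi_i$ is left adjoint to the quotient functor $\pi'_i \co \Db(R)_{\ge i} \to \Dsg(R)$. This is the real content of the proposition and is a nontrivial theorem (the paper cites \cite[Proposition 5.8]{BS} for it); your justification --- that the ambiguity is a perfect complex which is ``projective and injective-like relative to $\Perf_{\gr}(R)$'' --- is not an argument, and the statement is in fact false without restricting the target to $\Db(R)_{\ge i}$: for $N$ perfect one has $\Hom_{\Dsg(R)}(M, qN) = 0$, while $\Hom_{\Dbgr(R)}(\varphi_i(M), N)$ need not vanish. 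In general $\Hom$ in a Verdier quotient is a colimit over roofs and does not compute as $\Hom$ out of any single lift; identifying it with $\Hom$ out of the specific lift $\varphi_i(M)$ uses the particular construction of $\varphi_i$ via minimal resolutions, Gorenstein duality, and the degree truncation. Your fallback suggestion (verify the triangle identities via the semiorthogonal decompositions) is also circular in spirit, since the semiorthogonal decompositions in Orlov's theorem are themselves established using these adjunctions. Either supply the proof of the adjunction $\varphi_i \dashv \pi'_i$ or cite it, as the paper does.
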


\begin{proof}
Given $i \in \Z$, let $\Db(R)_{\ge i}$ denote the subcategory of $\Db(R)$ given by objects whose cohomology is concentrated in internal degrees at least $i$. It follows from the proof of \cite[Lemma 5.4]{BS} that $\varphi_i$ takes values in $\Db(R)_{\ge i}$. The functor $\Phi_i$ therefore factors as the composition
$
\Dsg(R) \xra{\varphi_i} \Db(R)_{\ge i}  \xra{\pi_i} \Db(X),
$
where $\pi_i$ is given by sheafification. On the other hand, the functor $\Psi_{i+a}$ may be expressed as the composition 
$
\Db(X) \xra{\psi_{i+a}}  \Db(R)_{\ge i} \xra{\pi'_i} \Dsg(R),
$
where $\pi'_i$ is the canonical surjection. The map $\varphi_i$ is the left adjoint of~$\pi'_i$~\cite[Proposition 5.8]{BS}, and $\pi_{i}$ is the left adjoint of~$\psi_{i+a}$; the result follows.
\end{proof}

\subsection{Graded Auslander duality} Let $A = \bigoplus_{i \ge 0} A_i$ be a commutative, graded Gorenstein ring of dimension $t$ such that $A_0 = k$, and assume $A$ has an isolated singularity. There is an isomorphism $\mathbf{R}{\uHom}_A(k, A) \cong k(b)[-t]$ in $\Db_{\gr}(A)$ for some $b, t \in \Z$. For example, our graded hypersurface ring $R$ is Gorenstein, and in this case $b$ is equal to $a = n+1 - d$, and $t$ is equal to $n$. The following result is a version of Auslander duality~\cite{auslander} for the graded singularity category $\Dsing(A)$ (see also \cite[(3.7)]{murfet}). Given a graded $k$-vector space $V$, let $V^* \ce \uHom_k(V, k)$ denote its $k$-dual. 

\begin{prop}[\cite{KMV} Section 4.3] 
\label{auslander}
For any $M, N \in \Dsing(A)$, there is a natural isomorphism 
$$
\Hom_{\Dsing(A)}(M, N)^* \cong \Hom_{\Dsing(A)}(N, M(-b)[t-1]).
$$
That is, the automorphism $M \mapsto M(-b)[t-1]$ of $\Dsing(A)$ is a Serre functor.
\end{prop}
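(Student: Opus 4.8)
The plan is to reduce the statement, via Buchweitz's equivalence, to a claim about stable homomorphisms of graded maximal Cohen--Macaulay modules, to recognize that claim as a graded form of Auslander--Reiten duality, and then to compute the Auslander--Reiten translate explicitly, finding that it equals the twist--shift $M \mapsto M(-b)[t-1]$. For the reduction, apply the equivalence $\MCM(A) \xra{\simeq} \Dsing(A)$ of \eqref{buchweitz} with $A$ in place of $R$: it lets us assume $M$ and $N$ are graded maximal Cohen--Macaulay $A$-modules, so that $\Hom_{\Dsing(A)}(M,N)$ is identified with degree-$0$ $A$-linear maps $M \to N$ modulo those factoring through a free module. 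The isolated--singularity hypothesis enters exactly here: for maximal Cohen--Macaulay $M,N$ the internal stable Hom module is locally free on the punctured spectrum, hence of finite length, so $\Hom_{\Dsing(A)}(M,N)$ is a finite-dimensional graded $k$-vector space on which $(-)^*$ makes sense. We will also use the standard identification $\operatorname{Ext}^1_A(N,X) \cong \Hom_{\Dsing(A)}(N, X[1])$ for maximal Cohen--Macaulay $N$ and $X$, where $[1]$ denotes the cosyzygy shift of $\Dsing(A)$.

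The heart of the argument is the graded analogue of Auslander's theorem~\cite{auslander}: for maximal Cohen--Macaulay $M,N$ there is an isomorphism, natural in both variables,
\[
\Hom_{\Dsing(A)}(M,N)^* \;\cong\; \operatorname{Ext}^1_A\bigl(N,\ \tau M\bigr),\qquad \tau M \ce \uHom_A\bigl(\Omega^{t}\operatorname{Tr} M,\ \omega_A\bigr),
\]
where $\operatorname{Tr} M$ is the Auslander transpose of $M$ (the cokernel of the $A$-dual of a minimal presentation of $M$), $\Omega^{t}$ is the $t$-th syzygy, and $\omega_A$ is the graded canonical module. I would establish this by adapting the local argument: write down the Auslander--Bridger comparison relating $\uHom_A(M,N)$, $\uHom_A(M,A)\otimes_A N$, and the modules $\operatorname{Ext}^i_A(\operatorname{Tr} M,N)$, and then apply graded local duality to convert the relevant $\operatorname{Tor}$ term into the $k$-dual of an $\operatorname{Ext}$; the finiteness needed to dualize is once more supplied by the isolated--singularity hypothesis (which forces $\operatorname{Tr} M$ to have finite length), and one checks functoriality in $M$ and $N$.

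It then remains to compute $\tau$ when $A$ is Gorenstein. The hypothesis $\mathbf{R}\uHom_A(k,A)\cong k(b)[-t]$ says precisely that $\omega_A \cong A(-b)$. Since $A$ is Gorenstein, $\operatorname{Ext}^{>0}_A(M,A)=0$ for $M$ maximal Cohen--Macaulay, so dualizing a minimal free resolution of $M$ into $A$ gives an exact sequence exhibiting $\operatorname{Tr} M$ as the double cosyzygy of $\uHom_A(M,A)$, i.e.\ $\operatorname{Tr} M \cong (\uHom_A(M,A))[2]$ in $\Dsing(A)$. Hence $\Omega^{t}\operatorname{Tr} M \cong (\uHom_A(M,A))[2-t]$, and, using that $\uHom_A(-,A)$ is an exact duality on $\Dsing(A)$ reversing the shift (with $\uHom_A(\uHom_A(M,A),A)\cong M$), we get $\tau M \cong \uHom_A\bigl((\uHom_A(M,A))[2-t],\ A\bigr)(-b) \cong M(-b)[t-2]$. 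Therefore $\operatorname{Ext}^1_A(N,\tau M) \cong \Hom_{\Dsing(A)}(N,(\tau M)[1]) \cong \Hom_{\Dsing(A)}(N, M(-b)[t-1])$, which with the displayed isomorphism proves the proposition; unwinding shows the isomorphism is induced by Yoneda composition followed by a trace $\Hom_{\Dsing(A)}(M,M(-b)[t-1])\to k$, so $M\mapsto M(-b)[t-1]$ is indeed a Serre functor.

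The step I expect to be the main obstacle is the graded Auslander--Reiten isomorphism of the second paragraph: the Auslander--Bridger comparison and graded local duality must be set up so that all grading twists match the conventions of Setup~\ref{notation}, so that the translate carries precisely the twist $(-b)$; once that is done, the syzygy-and-dual computation of $\tau$ is routine. A variant that lightens the twist bookkeeping, at the price of a more delicate naturality check, is to use that $\Dsing(A)$ is classically generated by the residue field $k$ for $A$ with an isolated singularity: construct the candidate natural transformation (Yoneda composition with a trace) for general $M,N$, and check it is a perfect pairing by d\'evissage, reducing to the case $M=N=k$, where it amounts to the finiteness of the graded endomorphism algebra of $k$ in $\Dsing(A)$ together with the input isomorphism $\mathbf{R}\uHom_A(k,A)\cong k(b)[-t]$.
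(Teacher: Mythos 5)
The paper does not prove this proposition; it simply cites \cite{KMV} (see also the reference to \cite{murfet}), so there is no internal argument to compare against. Your route --- reduce via Buchweitz's equivalence to $\MCM(A)$, invoke graded Auslander--Reiten duality $\uHom$-finiteness and all, and then compute the translate $\tau$ for a Gorenstein ring --- is essentially the standard proof found in the cited sources, and your bookkeeping checks out: $\operatorname{Tr} M \cong \uHom_A(M,A)[2]$ stably, $\omega_A \cong A(-b)$, hence $\tau M \cong M(-b)[t-2]$ and $\Ext^1_A(N,\tau M) \cong \Hom_{\Dsing(A)}(N, M(-b)[t-1])$, which for $b=0$, $t=n$ recovers the expected Calabi--Yau shift $[\dim X]$.

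One genuine slip in the justification: the isolated-singularity hypothesis does \emph{not} force $\operatorname{Tr} M$ to have finite length --- for a nonfree MCM module $M$ of positive dimension, $\operatorname{Tr} M$ is stably isomorphic to a cosyzygy of the MCM module $\uHom_A(M,A)$ and so has full dimension. What the hypothesis actually gives is that $M$ (hence $\operatorname{Tr} M$) is locally free on the punctured spectrum, so that the stable Hom modules and the relevant $\Tor$ and $\Ext$ modules appearing in the Auslander--Bridger comparison are supported at $\fm$ and therefore finite-dimensional; that is the finiteness you need in order to apply graded local duality. With that correction the argument goes through. A second point worth pinning down if you write this up in full: the paper's stated convention that the shift on $\MCM(A)$ is the \emph{syzygy} is opposite to the cosyzygy convention your identification $\Ext^1_A(N,X)\cong\Hom_{\Dsing(A)}(N,X[1])$ presupposes, so you must fix one convention and verify that the final answer $(-b)[t-1]$ (rather than $(-b)[1-t]$) comes out on the correct side.
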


\section{Proof of Theorem~\ref{introthm}}
\label{sec:translation}

 We will need the following technical result. 

\begin{prop}
\label{psik}
 Assume $a \le 0$. Let $\ell \in \Z$, and consider $k(\ell)$ as a complex of graded $R$-modules concentrated in cohomological degree 0. Write $\ell = qd - r$, where $0 \le r < d$. Let $\T_{\PP^n_k}$ denote the tangent bundle on~$\PP^n_k$.  There is an isomorphism $\Phi_0(k(\ell)) \cong i^* (\bigwedge^{r + a} \T_{\PP^n_k})(-r - a)[2q + n  - r - a - 1 ]$ in $\Db(X)$.
\end{prop}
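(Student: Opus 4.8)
The plan is to compute $\Phi_0(k(\ell))$ by unwinding the definition of $\varphi_0$ applied to the $R$-module $k(\ell)$, then sheafifying, and finally identifying the resulting sheaf on $X$ via the Koszul resolution of $k$ over $S$. First I would reduce to understanding the minimal graded free resolution $F$ of $k$ over $R = S/(f)$. Since $R$ is a hypersurface, this resolution is eventually $2$-periodic: it begins as the pullback to $R$ of the Koszul complex on $x_0, \dots, x_n$ over $S$ (which has length $n+1$), and after that point it becomes periodic of period $2$, built from the matrix factorization of $f$ associated to $k$. Concretely, this matrix factorization is the one coming from the regular sequence $x_0,\dots,x_n$ and the Koszul-type decomposition $f = \sum x_i g_i$; the periodic syzygies are exterior powers $\bigwedge^\bullet$ of the free module $R^{n+1}$, with twists by multiples of $d$ coming from the periodicity $F[2] = F(d)$.

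Next I would carry out the three-step recipe for $\varphi_0$: (i) take $F$, the minimal free resolution of $k(\ell)$; (ii) form $F_{\succeq 0}$, i.e. discard the free summands $R(j)$ with $j > 0$, dualize, and take a minimal free resolution $F'$; (iii) set $\varphi_0(k(\ell)) = \uHom_R(F', R)_{\prec 0}$. The key bookkeeping point is to track which twists $R(j)$ appear where. Writing $\ell = qd - r$ with $0 \le r < d$, the periodicity of the resolution means that the ``boundary'' between the non-periodic Koszul part and the periodic part—together with the truncation $(-)_{\succeq 0}$ and then $(-)_{\prec 0}$ after redualizing—selects exactly one homological spot, and the surviving free module is (a twist of) $\bigwedge^{r+a} R^{n+1}$, since the Koszul differential on $\bigwedge^\bullet R^{n+1}$ is given by the $x_i$'s and the matrix factorization maps are the two ``halves'' $f = \sum x_i g_i$. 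The homological shift $[2q + n - r - a - 1]$ should emerge from counting: $n+1$ steps along the Koszul part, $2q$ more from $q$ applications of the period-$2$ shift absorbing the twist by $qd$, and a correction by $r + a$ from landing inside the last Koszul block rather than at its end, with an overall $-1$ from the conventions in the definition of $\varphi_i$ (cf. the shift appearing in Example~\ref{ex:MCM}). After sheafifying, $\bigwedge^{r+a} R^{n+1}$ pulls back from $\bigwedge^{r+a}$ of the trivial bundle $\mathcal{O}_{\PP^n}^{n+1}$ on $\PP^n$; using the Euler sequence $0 \to \mathcal{O} \to \mathcal{O}(1)^{n+1} \to \T_{\PP^n} \to 0$ and its exterior powers, one identifies $\widetilde{\bigwedge^{r+a} R^{n+1}}$ (with the appropriate twist) as $i^*(\bigwedge^{r+a}\T_{\PP^n_k})(-r-a)$, yielding the claimed formula.

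I expect the main obstacle to be the precise twist-and-shift bookkeeping in step (ii)–(iii): keeping straight the grading twists introduced by $F[2] = F(d)$, the asymmetry between the two matrix-factorization maps $s^0, s^1$ (one raises degree by $d$, one does not), the effect of the truncations $(-)_{\succeq 0}$ and $(-)_{\prec 0}$ sandwiched around a dualization, and reconciling all of this with the cohomological indexing conventions so that the exponents come out as exactly $r+a$ and $2q + n - r - a - 1$. A clean way to organize this is probably to first verify the case $\ell = 0$ (i.e. $q = 0$, $r = 0$) directly—this is essentially dual to Example~\ref{ex:MCM}—and then propagate to general $\ell$ using the identity $k(\ell) \cong k(\ell - d)[2]$ in $\Dsg(R)$ together with $\varphi_0(M(d)[2]) \simeq \varphi_0(M)$-type relations, reducing the general statement to the finitely many cases $0 \le \ell < d$, which can be handled by direct inspection of the truncated Koszul resolution. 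A secondary check is to confirm the Euler-sequence identification of exterior powers of the trivial bundle with twisted exterior powers of $\T_{\PP^n}$, which is standard but worth stating explicitly since the twist $(-r-a)$ is exactly what makes the final answer match.
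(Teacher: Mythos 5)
Your overall strategy matches the paper's: reduce to finitely many values of $\ell$ via the periodicity $k(\ell)\cong k(\ell-d)[2]$ in $\Dsg(R)$, write down the Shamash resolution $F$ of $k$ over $R$, unwind the recipe for $\varphi_0$, and identify the sheafification using Koszul-complex/tangent-bundle identities. But two of the central steps, as you describe them, would fail. First, you give no way to actually compute the minimal free resolution $F'$ of $\uHom_R(F(\ell)_{\succeq 0}, R)$. This complex is unbounded above in cohomological degree, and its minimal resolution is not read off from inspection; the paper's proof hinges on graded Gorenstein duality, $\mathbf{R}\uHom_R(k,R)\cong k(a)[-n]$, which identifies $\uHom_R(F(\ell),R)$ with $k(a-\ell)[-n]$ in $\Db(R)$ and hence gives $F' = F(a-\ell)[-n]$ on the nose. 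Without this input your "twist-and-shift bookkeeping" has nothing to bookkeep.

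Second, your claim that the truncations "select exactly one homological spot" whose surviving free module is a twist of $\bigwedge^{r+a}R^{n+1}$, which then sheafifies to $i^*(\bigwedge^{r+a}\T_{\PP^n_k})(-r-a)$, is not correct: the sheafification of a single graded free module is a direct sum of line bundles, never a twisted exterior power of $\T_{\PP^n_k}$. What actually survives the final truncation $(-)_{\prec 0}$ is an entire brutally truncated Koszul complex $R(\ell-a)\to R(\ell-a+1)^{n+1}\to\cdots\to R^{\binom{n+1}{a-\ell}}$ spread over many cohomological degrees; only after sheafifying is this complex a locally free resolution of the single sheaf $i^*(\bigwedge^{a-\ell}\T_{\PP^n_k})(\ell-a)$ (via the exterior powers of the Euler sequence, as in \cite[Theorem 17.16]{eisenbudCA}). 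Relatedly, the shift $[-1]$ does not come from "conventions in the definition of $\varphi_i$": it comes from the exact triangle $C_{\prec 0}\to C\to C_{\succeq 0}\to C_{\prec 0}[1]$ applied to $C=\uHom_R(F,R)(\ell-a)[n]\cong k(\ell)$, whose sheafification vanishes, so that the sheafification of the $\prec 0$ piece is the shift by $[-1]$ of the sheafification of the $\succeq 0$ piece. These are genuine gaps, not just bookkeeping left to the reader.
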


\begin{proof}

We have $F[2] = F(d)$ for all $F \in \hmfgr(f)$, and the equivalence in \eqref{buchweitz} commutes with grading twists and cohomological shifts; it follows that $k(\ell)[2j] \cong k(\ell+jd)$ in $\Dsg(R)$ for all $j \in \Z$. Since the functor $\Phi_0$ commutes with cohomological shifts, we may assume $q = 0$, i.e. $-d  < \ell \le 0$. We therefore must show
$\Phi_0(k(\ell)) \cong i^* (\bigwedge^{a -\ell} \T_{\PP^n_k}(\ell - a))[  n + \ell - a - 1]$.

Let $F$ be the minimal $R$-free resolution of $k$. The resolution $F$ is given by applying the Shamash construction~\cite[Construction 4.1.3]{EP} to the Koszul complex
$$
K \ce \left[S(-n-1) \to S(-n)^{n+1} \to \cdots \to S(-1)^{n+1} \to S \right]
$$
on the variables $x_0, \dots, x_n \in S$. The underlying module of $F$ is thus $K \otimes_S D \otimes_S R$, where $D$ is a divided power algebra on a single variable~$t$ of cohomological degree $-2$ and internal degree $d$. We therefore have:
$$
F^m = \bigoplus_{-s - 2j = m, j \ge 0} S(-s)^{\binom{n+1}{s}} \otimes_S S(-jd) \cdot t^{(j)}\otimes_S R
= \bigoplus_{-s - 2j = m, j \ge 0} R(-s - jd)^{\binom{n+1}{s}}.
$$
The first few terms of $F$ are
$$
\left[ 
\cdots \to R(-3)^{\binom{n+1}{3}} \oplus R(-d - 1)^{n+1}\to R(-2)^{\binom{n+1}{2}} \oplus R(-d) \to R(-1)^{n+1}  \to R  
\right].
$$
We have $F(\ell)_{\succeq 0} = F(\ell)$ (using the truncation notation from the description of the functor $\Phi_0$ in Section~\ref{gradedmf}). Since  $\mathbf{R}$$\uHom_R(k, R) \cong k(a)[-n]$, we have an isomorphism 
$$
\uHom_R(F(\ell)_{\succeq 0}, R) = \uHom_R(F(\ell), R) \cong k(a -\ell)[-n]
$$
in $\Db(R)$, and so the minimal free resolution of  $\uHom_R(F(\ell)_{\succeq 0}, R)$ is $F' \ce F(a-\ell)[-n]$. We now compute:
\begin{align*}
\varphi_0(k(\ell)) & = \uHom_R(F', R)_{\prec  0} \\
&= \uHom_R(F, R)(\ell - a )[n]_{\prec 0} \\
&= \left[ R(\ell -a) \to R(\ell - a + 1)^{n+1} \to R(\ell - a + 2)^{\binom{n+1}{2}} \oplus R(\ell -a + d) \to \cdots \right]_{\prec0},
\end{align*}
where $R(\ell -a)$ is in cohomological degree $-n$. Observe that
$$
\uHom_R(F, R)(\ell -a)[n]  \cong \text{$\mathbf{R}$$\uHom_R(k, R)(\ell -a)[n]$}\cong k(\ell);
$$
in particular, the sheafification of $\uHom_R(F, R)(\ell -a)[n]$ is the zero object in $\Db(X)$. It follows that $\Phi_0(k(\ell))$ is isomorphic to the sheafification of the complex
$$
\left[
R(\ell -a) \to R(\ell -a + 1)^{n+1} \to R(\ell -a + 2)^{\binom{n+1}{2}} \oplus R(\ell - a + d) \to \cdots \right]_{\succeq 0}, \\
$$
where $R(\ell -a)$ is in cohomological degree $-n + 1$. Notice that $\ell - a + d > 0$ (this is where we use the assumption $a \le 0$). Thus, if $\ell > a$, then $\Phi_0(k(\ell)) = 0$; otherwise, $\Phi_0(k(\ell))$ is isomorphic to 
$$
\left[
\OO_X(\ell -a) \to \OO_X(\ell -a + 1)^{n+1} \to \OO_X(\ell -a + 2)^{\binom{n+1}{2}} \to \cdots \to \OO_X^{\binom{n+1}{a - \ell}}
\right],
$$
a brutally truncated Koszul complex on $x_0, \dots, x_n$, where $\OO_X(\ell -a)$ is in cohomological degree $-n+1$. This complex is a locally free resolution of the sheaf
$
i^* \ker\left(\OO_{\PP^n_k}(1)^{\binom{n+1}{a-\ell + 1}} \to \OO_{\PP^n_k}(2)^{\binom{n+1}{a-\ell + 2}} \right) \cong i^* (\bigwedge^{a - \ell} \T_{\PP^n_k})(\ell -a)
$
in cohomological degree $a - \ell -n  +1$, where the isomorphism follows from \cite[Theorem 17.16]{eisenbudCA}.
\end{proof}

\begin{rem}
In the setting of Proposition~\ref{psik}, assume $X$ is smooth. Combining a result of Tu~\cite[Theorem 1.4]{Tu14} and the equivalence~\eqref{buchweitz}, one sees that the category $\Dsing(R)$ is generated by the objects $k, \dots, k(d-1)$. Proposition~\ref{psik} therefore implies that the objects $i^* (\bigwedge^{r} \T_{\PP^n_k})(-r)$ with $0 \le r < d$ generate $\Db(X)$.
\end{rem}

A graded matrix factorization $(F^0, F^1, s^0, s^1)$ is called \emph{reduced} if the maps $s^0$ and $s^1$ are minimal, i.e. $s^0 \otimes k = s^1 \otimes k = 0$. 

\begin{thm}
\label{translation}
Assume $X$ is smooth and $a \le 0$, and let $\cC \in \Db(X)$. Let $F = (F^0, F^1, s^0, s^1)$ be a reduced graded matrix factorization of $f$ such that $\Psi_a(\cC) \cong \coker(F)$, where $\Psi_a$ is as in Theorem~\ref{orlovthm}. For $i \in \{ 0,1\}$ and $j \in \Z$, write $b^i_j \ce \dim_k \Hom_S(F^i, k(-j))$, the number of degree~$j$ generators of $F^i$. 
\begin{enumerate}
\item Fix $i \in \{0,1\}$ and $j \in \Z$, and write $a-j = qd - r$ for $0 \le r < d$. We have: 
$$
b^i_j = h^{r + a -2q - i + 1}\left(\PP^n_k, i_*(\cC) \otimes \Omega^{r+a}_{\PP^n_k}(r+a)\right).
$$
\item Additionally, $\rank(F) = \rho(\cC)$, where $\rho$ is as defined in~\eqref{eqn:rho}. 
\end{enumerate}
\end{thm}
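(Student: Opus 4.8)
The plan is to compute both sides in terms of the generators of the reduced matrix factorization $F$ and match them via Orlov's functors. First I would translate the hypothesis $\Psi_a(\cC) \cong \coker(F)$ into a statement about $\Phi_0$. By Proposition~\ref{prop:adj}, $\Phi_0$ is left adjoint to $\Psi_a$, and by Theorem~\ref{orlovthm}(2)--(3) the functor $\Psi_a$ is fully faithful; this lets me identify $\Phi_0(\coker(F))$ with $\cC$ directly (in the Calabi-Yau case $\Phi_0$ and $\Psi_0$ are inverse equivalences; for $a<0$ one uses that $\Phi_0$ kills the exceptional objects $k(0),\dots,k(a+1)$ appearing in the semiorthogonal decomposition, which one checks from the definition of $\varphi_0$ or from Proposition~\ref{psik}, so that $\Phi_0 \circ \Psi_a \cong \id$).

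Next I would relate the Betti numbers $b^i_j$ of $F$ to morphism spaces in $\Dsing(R)$. Since $F$ is reduced, $\coker(s^0)$ has minimal free resolution with $F^0, F^1$ as its first two terms, so $b^i_j = \dim_k \Hom_{\Dsing(R)}(\coker(F), k(-j)[i])$ up to the usual reindexing — concretely, $b^0_j$ counts degree-$j$ generators of $F^0$ and $b^1_j$ those of $F^1$, and these are computed by $\Ext$-groups $\uExt^i_R(\coker(F), k)_{-j}$, which in the stable category become $\Hom_{\Dsing(R)}(\coker(F), k(-j)[i])$ for $i \in \{0,1\}$ (using that $\coker(F)$ is MCM and that $R$-free summands contribute nothing stably). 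Now I apply the adjunction from Proposition~\ref{prop:adj} in the form $\Hom_{\Dsing(R)}(\coker(F), \Psi_a(?)) \cong \Hom_{\Db(X)}(\Phi_0 \coker(F), ?) = \Hom_{\Db(X)}(\cC, ?)$, together with the explicit computation of $\Phi_0(k(\ell))$ from Proposition~\ref{psik}. A subtlety: $k(-j)$ is not literally in the image of $\Psi_a$ when $a<0$, so I should instead go the other direction — use that $\Psi_a$ is fully faithful and compute $\Hom_{\Dsing(R)}(\Psi_a(\cC), k(-j)[i])$ via graded Auslander duality (Proposition~\ref{auslander}), turning it into $\Hom_{\Dsing(R)}(k(-j)[i], \Psi_a(\cC)(-a)[n-1])^*$, and only then apply Proposition~\ref{psik} to $\Phi_0$ of the shifted $k(-j)$, finally converting to sheaf cohomology on $\PP^n_k$ via $\Hom_{\Db(X)}(i^*(\bigwedge^{r+a}\T)(-r-a)[\ast], \cC) \cong \rH^\ast(\PP^n_k, i_*\cC \otimes \Omega^{r+a}(r+a))$ using the self-duality $(\bigwedge^{r+a}\T_{\PP^n})(-r-a) \cong \Omega^{r+a}_{\PP^n}$ and adjunction $i_* i^* \cong i_* \cO_X \otimes -$. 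Carefully bookkeeping the cohomological shift $2q+n-r-a-1$ from Proposition~\ref{psik} against the Serre-functor shift $n-1$ should produce exactly the index $r+a-2q-i+1$ claimed in part~(1).

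For part~(2), I would sum part~(1) over all $i \in \{0,1\}$ and $j \in \Z$. Since $\rank(F) = \rank(F^0) + \rank(F^1) = \sum_{j} (b^0_j + b^1_j)$, and as $j$ ranges over $\Z$ with $a - j = qd-r$, the pair $(q,r)$ ranges over all of $\Z \times \{0,\dots,d-1\}$; the cohomological degree $r+a-2q-i+1$ then ranges over all integers as $q$ varies. Thus $\sum_{i,j} b^i_j = \sum_{r=0}^{d-1} \sum_{j' \in \Z} h^{j'}(\PP^n_k, i_*\cC \otimes \Omega^{r+a}_{\PP^n_k}(r+a)) = \sum_{r=0}^{d-1} h(\PP^n_k, i_*\cC \otimes \Omega^{r+a}(r+a))$. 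Reindexing $r' = r+a$ (and noting $\Omega^{r'}_{\PP^n_k} = 0$ unless $0 \le r' \le n$, which automatically restricts the range) identifies this with $\sum_{r'=0}^{n} h(\PP^n_k, i_*\cC \otimes \Omega^{r'}(r')) = \rho(\cC)$, provided the range $\{a, a+1, \dots, a+d-1\}$ of $r'$ covers all of $\{0,1,\dots,n\}$ where $\Omega^{r'}$ is nonzero — which holds precisely because $a \le 0$ and $a + d - 1 = n$.

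\textbf{Main obstacle.} The hard part will be getting the adjunction/duality bookkeeping exactly right when $a < 0$: $k(-j)$ need not lie in the image of $\Psi_a$, so one cannot naively apply Proposition~\ref{prop:adj}, and one must route through the Serre functor of Proposition~\ref{auslander} (or, alternatively, verify that $\Phi_0(\coker F) \cong \cC$ by a separate argument and then compute $\Hom_{\Db(X)}(\cC, \Phi_0(k(-j)[i]))$ using fully-faithfulness of $\Psi_a$ in the other slot). Tracking the two independent shifts — the $2q+n-r-a-1$ from Proposition~\ref{psik} and the $n-1$ from the Serre functor — and confirming they combine to the stated index without an off-by-one or sign error is where the real care is needed; everything else is formal manipulation of semiorthogonal decompositions, minimal free resolutions, and the standard isomorphism $\bigwedge^p \T_{\PP^n} \cong \Omega^{n-p}_{\PP^n}(n+1)$ specialized appropriately.
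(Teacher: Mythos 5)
Your proposal follows essentially the same route as the paper's proof: express $b^i_j$ as stable Hom groups via the minimal free resolution of $\coker(F)$, apply graded Auslander duality (Proposition~\ref{auslander}) and the adjunction $\Phi_0 \dashv \Psi_a$ (Proposition~\ref{prop:adj}), identify $\Phi_0(k(a-j))$ via Proposition~\ref{psik}, convert to cohomology on $\PP^n_k$ by the projection formula, and sum over $i,j$ for part (2). Two small corrections: $(\bigwedge^{r+a}\T_{\PP^n_k})(-r-a)$ is not isomorphic to $\Omega^{r+a}_{\PP^n_k}$ --- what the computation actually uses is that its \emph{dual} is $\Omega^{r+a}_{\PP^n_k}(r+a)$; and the opening reduction to $\Phi_0(\coker F)\cong\cC$ is unnecessary, since the adjunction alone converts $\Hom_{\Dsg(R)}(k(a-j),\Psi_a(\cC)[n-i])$ into $\Ext_X^{n-i}(\Phi_0(k(a-j)),\cC)$ without ever needing $k(-j)$ to lie in the image of $\Psi_a$.
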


\begin{proof}
Write $M \ce \coker(F)$. Since $F$ is reduced, the minimal free resolution of $M$ has the form:
$$
\cdots \xra{s^1} F^0(-d) \xra{s^0} F^1(-d) \xra{s^1} F^0 \xra{s^0} F^1 \to M.
$$
Thus, the graded Betti number of $M$ in cohomological degree 0 (resp. $-1$) and internal degree $j$ is $b^1_j$ (resp. $b^0_j)$. By \cite[Lemma 3.1]{pavlov21} and the identification of the shift $M[1]$ in $\Dsg(R)$ with the first syzygy of $M$, we have $b^i_j = \dim_k \Hom_{\Dsg(R)}\left(\Psi_a(\cC), k(-j)[i-1]\right)$. We now compute:
\begin{align*}
\Hom_{\Dsg(R)}\left(\Psi_a(\cC), k(-j)[i-1]\right) &=  \Hom_{\Dsg(R)}\left(k(a-j), \Psi_a(\cC)[n-i]\right) & (\text{Proposition}~\ref{auslander})\\
 &= \Ext^{n-i}_X\left(\Phi_0(k(a-j)), \cC\right) & (\text{Proposition}~\ref{prop:adj})\\
&=  \Ext_X^{r + a + 1 -i - 2q}\left(i^* (\bigwedge^{r+a} \T_{\PP^n_k})(-r-a), \cC \right) & (\text{Theorem}~\ref{psik}). 
\end{align*}
Finally, we have:
\begin{align*}
\Ext_X^{r + a + 1 -i - 2q}\left(i^* (\bigwedge^{r+a} \T_{\PP^n_k})(-r-a), \cC \right)  &= H^{r + a + 1 -i - 2q}\left(X,  \cC \otimes i^* (\O^{r+a}_{\PP^n_k})(r+a) 
\right)\\ 
&= H^{r + a + 1 -i - 2q}\left(\PP^n_k, i_*(\cC) \otimes \O^{r+a}_{\PP^n_k}(r+a) 
 \right),
\end{align*}
where the second equality follows from the projection formula. This proves (1). For~(2), we note that 
$
\rank(F) = \rank(F^0) + \rank(F^1) 
= \sum_{j \in \Z} (b^0_j + b^1_j)
~= \rho(\cC).
$
\end{proof}

\begin{proof}[Proof of Theorem~\ref{introthm}]
Assume the graded version of Conjecture~\ref{BGSconj} holds for $f$. Let $\cC \in \Db(X)$. Choose $F = (F^0, F^1, s^0, s^1) \in \mfgr(f)$ such that $\coker(F) \cong~\Psi_a(\cC)$. We have 
$
2^{e+1} \le 2 \cdot \rank(F^0) =\rho(\cC), 
$
where the equality follows from Theorem~\ref{translation}(2). 
This proves Part (1). Assume Conjecture~\ref{newconj} holds, let $F$ be a nontrivial graded matrix factorization of $f$, and let $\cC \ce ( \Phi_0 \circ \coker)(F)$. By Theorem~\ref{orlovthm}(3) and Theorem~\ref{translation}(2), we have 
$
2 \cdot \rank(F^0) = \rho(\cC) \ge 2^{e+1}.
$
\end{proof}

\begin{rem}
\label{beilinsonremark}
Given any object $\cC \in \Db(X)$, there is a second-quadrant spectral sequence 
$$
E_1^{p, q} = H^q\left(\PP^n_k, i_*(\cC) \otimes \Omega_{\PP^n_k}^{-p}(-p)\right) \otimes_k \OO(p)  \Rightarrow \mathcal{H}^{p+q}(i_*(\cC))
$$
called the \emph{Beilinson spectral sequence} \cite{beilinson}. Notice that the total rank of the $E_1$ page of this spectral sequence is exactly $\rho(\cC)$. 
\end{rem}

\section{Examples}

We first note that the invariant $\rho$ enjoys a duality property:

\begin{prop}
\label{lem:dual}
For any bounded complex $\cC$ of locally free $\OO_X$-modules, we have $\rho(\cC) = \rho(\cC^\vee(1-a))$, where $\cC^\vee \ce \mathcal{H}om_X(\cC, \OO_X)$.
\end{prop}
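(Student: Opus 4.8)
The plan is to combine Serre duality on $\PP^n_k$ with Grothendieck duality for the closed embedding $i \c X \into \PP^n_k$. First recall that for any bounded complex $\cF$ of coherent sheaves on $\PP^n_k$ one has $\mathbf{R}\Gamma(\PP^n_k, \cF)^* \cong \mathbf{R}\Gamma\bigl(\PP^n_k, \mathbf{R}\mathcal{H}om_{\PP^n_k}(\cF, \OO(-n-1))\bigr)[n]$; since $h(\PP^n_k, -)$ is unchanged under cohomological shifts and under $k$-linear duality, this gives $h(\PP^n_k, \cF) = h\bigl(\PP^n_k, \mathbf{R}\mathcal{H}om_{\PP^n_k}(\cF, \OO(-n-1))\bigr)$. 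I will apply this with $\cF = i_*(\cC) \otimes \Omega^r_{\PP^n_k}(r)$ for each $r \in \{0, \dots, n\}$.

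Since $\Omega^r_{\PP^n_k}(r)$ is locally free, $\mathbf{R}\mathcal{H}om_{\PP^n_k}\bigl(i_*(\cC) \otimes \Omega^r_{\PP^n_k}(r), \OO(-n-1)\bigr) \cong \mathbf{R}\mathcal{H}om_{\PP^n_k}\bigl(i_*(\cC), \OO(-n-1)\bigr) \otimes \bigl(\Omega^r_{\PP^n_k}(r)\bigr)^\vee$, and the perfect pairing $\Omega^r_{\PP^n_k} \otimes \Omega^{n-r}_{\PP^n_k} \to \Omega^n_{\PP^n_k} = \OO(-n-1)$ identifies $\bigl(\Omega^r_{\PP^n_k}(r)\bigr)^\vee \cong \Omega^{n-r}_{\PP^n_k}(n+1-r)$. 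For the remaining factor I invoke Grothendieck duality for the regular codimension-one embedding $i \c X = V(f) \into \PP^n_k$: using $\det N_{X/\PP^n_k} \cong i^*\OO(d)$ and the fact that $\cC$, being a bounded complex of locally free sheaves, is perfect (so $\cC^\vee$ agrees with its derived dual), one gets $\mathbf{R}\mathcal{H}om_{\PP^n_k}\bigl(i_*(\cC), \OO(-n-1)\bigr) \cong i_*\bigl(\cC^\vee \otimes i^*\OO(d - n - 1)\bigr)[-1] = i_*\bigl(\cC^\vee(-a)\bigr)[-1]$, since $d - n - 1 = -a$.

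Combining these and using the projection formula to absorb a twist by $\OO(1)$ into $i_*$, I obtain, for each $r$, that $h\bigl(\PP^n_k, i_*(\cC) \otimes \Omega^r_{\PP^n_k}(r)\bigr)$ equals $h\bigl(\PP^n_k, i_*(\cC^\vee(-a)) \otimes \Omega^{n-r}_{\PP^n_k}(n+1-r)\bigr) = h\bigl(\PP^n_k, i_*(\cC^\vee(1-a)) \otimes \Omega^{n-r}_{\PP^n_k}(n-r)\bigr)$, the last equality because $\Omega^{n-r}_{\PP^n_k}(n+1-r) = \Omega^{n-r}_{\PP^n_k}(n-r) \otimes \OO(1)$ and $i_*(\cC^\vee(-a)) \otimes \OO(1) \cong i_*(\cC^\vee(1-a))$. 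Summing over $r = 0, \dots, n$ and reindexing by $s = n - r$ gives $\rho(\cC) = \rho(\cC^\vee(1-a))$.

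I expect the only delicate point to be pinning down the Grothendieck duality isomorphism with its correct cohomological shift (the codimension-one shift $[-1]$) and twist ($\det N_{X/\PP^n_k} \cong i^*\OO(d)$), in tandem with the normalization conventions for $\Omega^r(r)$ and the exterior-power self-duality $\bigl(\Omega^r_{\PP^n_k}\bigr)^\vee \cong \Omega^{n-r}_{\PP^n_k}(n+1)$; everything then telescopes so that the net twist is exactly by $\OO(1-a)$, but it is easy to be off by a twist or a shift along the way. The rest of the argument is routine.
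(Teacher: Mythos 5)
Your argument is correct and is in substance the same as the paper's: both proofs reduce to Serre duality together with the self-duality $(\Omega^r_{\PP^n_k})^\vee \cong \Omega^{n-r}_{\PP^n_k}(n+1)$ and a reindexing $r \mapsto n-r$, with the net twist $\OO(1-a)$ coming from $\omega_X = \OO_X(-a)$. The only difference is packaging: the paper applies the projection formula first and then Serre duality directly on $X$, whereas you factor that duality as Serre duality on $\PP^n_k$ composed with Grothendieck duality for the embedding $i$; the shift and twist bookkeeping in your version is all correct.
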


\begin{proof}
Noting that the canonical bundle of $X$ is $\OO_X(-a)$, we have:
\begin{align*}
\rho(\cC) &\ce \sum_{r = 0}^n h\left(\PP^n_k, i_*(\cC) \otimes \Omega^r_{\PP^n_k}(r) \right)\\
&= \sum_{r = 0}^n h\left(\PP^n_k, i_*(\cC)^\vee \otimes \bigwedge^r\cT_{\PP_k^n}(-r-a) \right) \\
&= \sum_{r = 0}^n h\left(\PP^n_k, i_*(\cC)^\vee \otimes \Omega_{\PP^n_k}^{n-r}(n+1-r-a) \right) \\
&= \sum_{r = 0}^n h\left(\PP^n_k, i_*(\cC)^\vee(1-a) \otimes \Omega_{\PP^n_k}^{r}(r) \right),
\end{align*}
where the second equality follows from Serre duality, the third from the isomorphism $\bigwedge^{r}\cT_{\PP_k^n} \cong \Omega_{\PP^n_k}^{n-r}(n+1)$ for all $0 \le r \le n$ (see, for instance, the proof of \cite[Theorem 17.16]{eisenbudCA}), and the last from reindexing. 
\end{proof}

We next observe that Conjecture~\ref{newconj} holds for points and for the structure sheaf $\OO_X$.
\begin{ex}
\label{ex:point}
Let $\cF$ be the structure sheaf of a point on $X$, and assume $n \ge 1$. We evidently have $$\rho(\cF) = \sum_{r = 0}^n \rank (\Omega^r_{\PP^n_k}) = 2^n  \ge 2^{e + 1}.$$
\end{ex}

\begin{ex}
By~\cite{OSS} Chapter 1, Section 1.1], for all $0 \le p, q \le n$ and $\ell \in \Z$, we have:
\begin{equation}
\label{lem:omega}
h^q(\PP^n_k, \Omega^p_{\PP^n_k}(\ell)) = 
\begin{cases}
\binom{\ell + n - p}{\ell} \binom{\ell-1}{p},  & q = 0, \text{ }\ell > p; \\
1, & \ell = 0, \text{ } q = p; \\
\binom{p - \ell}{-\ell}\binom{-\ell-1}{n-p}, & q = n, \text{ }\ell < p - n;\\
0, & \on{otherwise.}
\end{cases}
\end{equation}
For each $0 \le r \le n$, there is a short exact sequence
\begin{equation}
\label{eqn:ses}
0 \to \Omega^r_{\PP^n_k}(r  -d) \to \Omega^r_{\PP^n_k}(r) \to \OO_X\otimes \Omega^r_{\PP^n_k}(r) \to 0.
\end{equation}
Assume now that $a \le 0$. By~\eqref{lem:omega}, we have:
\begin{enumerate}
\item $h^i\left(\PP_k^n, \O_{\PP_k^n}^r(r  - d)\right) = 
\begin{cases} 
\binom{d}{d - r} \binom{d-r-1}{n-r},  & i = n; \\
0, & i \ne n.
\end{cases}
$
\item $h^i\left(\PP_k^n, \O_{\PP_k^n}^r(r )\right) = 
\begin{cases} 
1, & i = r = 0; \\
0, & \text{else.}
\end{cases}$
\end{enumerate}
The short exact sequence \eqref{eqn:ses} therefore implies:
$$
\rho(\OO_X) = 1 + \sum_{r = 0}^n \binom{d}{d - r}\binom{d-r-1}{n-r} \ge 1 + \sum_{r = 0}^n \binom{n+1}{n+1 - r} =
2^{n+1}  >2^{e + 1}.
$$

\end{ex}

\begin{rem}
\label{rem:false}
The assumption in Conjecture~\ref{newconj} that $a \le 0$ is necessary. For instance, suppose $n \ge 2$ and that $f$ is a linear form, so that $a = n  > 0$. For all $1 - n \le j \le 0$, the calculation~\eqref{lem:omega} and the short exact sequence~\eqref{eqn:ses} imply that
$$
\rho(\OO_X(j)) = h^{-j}(\PP_k^n,\OO_X \otimes \Omega^{-j}_{\PP^n_k}) + h^{-j}(\PP_k^n,\OO_X \otimes \Omega^{1-j}_{\PP^n_k}(1))  = h^{-j}(\PP_k^n,\Omega^{-j}_{\PP^n_k}) + h^{1-j}(\PP_k^n,\Omega^{1-j}_{\PP^n_k}) = 2 < 2^{e + 1}.
$$
Thus, each of the line bundles $\OO_X(1-n), \dots, \OO_X$ violates the inequality in Conjecture~\ref{newconj}. Additionally, if $n = d = 2$, then $\rho(\OO_X) = 2 < 2^{e+1} = 4$.

As a concrete example, take $n = 2$, $j = -1$, and $d = 1$. Formula~\ref{lem:omega} implies that
$$
h^i(\PP^n_k, \Omega^r_{\PP^n_k}(r-2)) = 
\begin{cases}
1, & r = i = 2, \\
0, & \text{else};
\end{cases}
\quad \text{and} \quad
h^i(\PP^n_k, \Omega^r_{\PP^n_k}(r-1)) =
\begin{cases}
1, & r = i = 1, \\
0, & \text{else}.
\end{cases}
$$
It thus follows from the short exact sequence \eqref{eqn:ses} (twisted by $-1$) that
$$
\rho(\OO_X(-1)) = h^1(\PP^n_k, \OO_X \otimes \Omega^1_{\PP^n_k})  + h^1(\PP^n_k, \OO_X \otimes \Omega^2_{\PP^n_k}(1)) = h^1(\PP^n_k, \Omega^1_{\PP^n_k}) + h^2(\PP^n_k, \Omega^2_{\PP^n_k}) = 2.
$$ 
A nearly identical calculation shows that, when $n = d = 2$, we have $\rho(\OO_X) = 2$.

\end{rem}

Finally, we consider a family of examples where the bound in Conjecture~\ref{newconj} is achieved.
\begin{ex}
\label{ex:sharp}
Assume $k$ contains a square root $i$ of $-1$, and suppose $f = x_0^n + \cdots + x_n^n$, where $n$ is even. In this case, $e = \frac{n}{2}$. We have graded matrix factorizations
$$
F = \left(S(-e), S, x_0^{e}, x_0^{e}\right) \in \mfgr(x_0^n), \quad F' = \left(S(-e), S, x_0^{e} + ix_1^{e},  x_0^{e} - ix_1^{e}  \right) \in \mfgr(x_0^n + x_1^n).
$$
We may construct a graded matrix factorization of $f$ whose rank meets the bound in Conjecture~\ref{BGSconj} by taking tensor products of $F$ and $F'$; we refer the reader to \cite[Section 4]{BD} for the definition of the tensor product of graded matrix factorizations. In more detail:
the tensor product of one copy of $F$ and $e$ copies of $F'$ gives a reduced graded matrix factorization $G = (G^0, G^1, s^0, s^1)$ of $f$, where $G^0 = S(-e)^{2^e}$, and $G^1 = S^{2^e}$. Let $M \ce \coker(s^0)$; since $M$ is MCM and generated in degree 0, $\Phi_0(M)$ is the vector bundle $\widetilde{M}$ (Example~\ref{ex:MCM}). Since $a = 0$, we conclude from Theorem~\ref{orlovthm}(3) that $\Psi_0(\widetilde{M}) = M$. By Theorem~\ref{translation}(2), we therefore have $\rho(\widetilde{M}) = 2^{e+1}$, i.e. $\widetilde{M}$ achieves the bound in Conjecture~\ref{newconj}. 
\end{ex}

\bibliographystyle{amsalpha}
\bibliography{Bibliography}

\end{document}